\newcommand{\Authornote}{\renewcommand{\thefootnote}{\fnsymbol{footnote}}}
\newcommand{\authornote}{\Authornote\footnote}
\theoremstyle{plain}
\newtheorem{theorem}{Theorem}[section]
\newtheorem{lemma}{Lemma}[section]
\theoremstyle{definition}
\newtheorem{example}{Example}[section]
\theoremstyle{remark}
\newcommand{\refex}[1]{Example~\ref{#1}}
\newcommand{\refthm}[1]{Theorem~\ref{#1}}
\newcommand{\reflmm}[1]{Lemma~\ref{#1}}
\newcommand{\reffig}[1]{Figure~\ref{#1}}
\newcommand{\reftab}[1]{Table~\ref{#1}}
\newcommand{\finbox}{\nolinebreak\hfill{\small $\blacksquare$}}
\newcommand{\MIN}{\mathop{\mathrm{Minimize}}}
\newcommand{\st}{\mathop{\mathrm{s.{\,}t.}}}
\newcommand{\ST}{\mathop{\mathrm{subject~to}}}
\newcommand{\tr}{\mathop{\mathrm{tr}}\nolimits}
\renewcommand{\Re}{\ensuremath{\mathbb{R}}}
\newcommand{\bi}[1]{\ensuremath{\boldsymbol{#1}}}
\newcommand{\rr}[1]{\ensuremath{\mathrm{#1}}}
\newcommand{\pdif}[2]{\frac{\partial #1}{\partial #2}}
\newcommand{\NC}{\ensuremath{\mathsf{N}}}
\newcommand{\PC}{\ensuremath{\mathcal{P}}}
\newcommand{\SC}{\ensuremath{\mathcal{S}}}
\begin{document}

\begin{center}
  {\Large\bfseries\sffamily%
  Structural Reliability under Uncertainty in Moments: }\\
  \medskip
  {\Large\bfseries\sffamily%
  Distributionally-robust Reliability-based Design Optimization }%
  \par%
  \bigskip%
  {%
  Yoshihiro Kanno~\authornote[2]{%
    Mathematics and Informatics Center, 
    The University of Tokyo, 
    Hongo 7-3-1, Tokyo 113-8656, Japan.
    E-mail: \texttt{kanno@mist.i.u-tokyo.ac.jp}. 
    }
  }
\end{center}

\begin{abstract}
  This paper considers structural optimization under a reliability 
  constraint, where the input distribution is only partially known. 
  Specifically, when we only know that the expected value vector and the 
  variance-covariance matrix of the input distribution belong to a given 
  convex set, we require that, for any realization of the input 
  distribution, the failure probability of a structure should be no 
  greater than a specified target value. 
  We show that this distributionally-robust reliability constraint can 
  be reduced equivalently to deterministic constraints. 
  By using this reduction, we can treat a reliability-based design 
  optimization problem under the distributionally-robust reliability 
  constraint within the framework of 
  deterministic optimization, specifically, nonlinear semidefinite 
  programming. 
  Two numerical examples are solved to show relation between 
  the optimal value and either the target reliability or the uncertainty 
  magnitude. 
\end{abstract}

\begin{quote}
  \textbf{Keywords}
  \par
  Reliability-based design optimization; 
  uncertain input distribution; 
  worst-case reliability; 
  robust optimization; 
  semidefinite programming; 
  duality. 
\end{quote}

\section{Introduction}

Reliability-based design optimization (RBDO) is a crucial tool for 
structural design in the presence of 
uncertainty \citep{VS10,AC10,YCLvTG11,MS19}. 
It adopts a probabilistic model of uncertainty, and evaluates 
the probability that a structural design satisfies (or, equivalently, 
fails to satisfy) performance requirements. 
An underlying premise is that complete knowledge on statistical 
information of uncertain parameters is available. 
In practice, however, it is often difficult to obtain statistical 
information with sufficient accuracy. 
This incents recent intensive study of RBDO with incomplete statistical 
information \citep{CCGLLG16,CAW10,GP06,IKK18,IK16,JCFY13,JCL19,%
MCCGLG18,NCLGL11,NCLGL11b,WHYWG20,YW08,ZM17,ZGXLE20}. 

Another methodology dealing with uncertainty in structural design is 
{\em robust design optimization\/} \citep{BS07,HZFY19,Kan20b}. 
Although there exist several different concepts in robust design 
optimization, in this paper we focus attention on the worst-case 
optimization methodology, which is called {\em robust optimization\/} 
in mathematical optimization community \citep{BtEN09}. 
This methodology adopts a possibilistic model of uncertainty, i.e., 
specifies the set of possible values that the uncertain parameters can 
take. 
We call this set an {\em uncertainty set\/}. 
Then, the objective value in the worst case is optimized, under the 
condition that the constraints are satisfied in the worst cases. 

This paper deals with RBDO when the input distribution is only partially 
known. 
Specifically, we assume that the true expected value vector and 
the true variance-covariance matrix are unknown (i.e., the true values 
of the first two moments of the input distribution are unknown), but 
they are known to belong to a given closed convex set. 
For example, suppose that the input distribution is a normal 
distribution, and we only know that each component of the expected value 
vector and the variance-covariance matrix belongs to a given closed 
interval. 
Then, for each possible realization of pair of the expected value 
vector and the variance-covariance matrix, there exists a single 
corresponding normal distribution. 
The set of all such normal distributions is considered as an 
uncertainty set of the input distribution.\footnote{%
This uncertainty set is dealt with in section~\ref{sec:robust}. } 
As another example, suppose that distribution type of the input 
distribution is also unknown. 
Then the uncertainty set is the set of all probability distributions, 
the expected value vector and the variance-covariance matrix of which 
belong to a given set.\footnote{%
This uncertainty set is dealt with in section~\ref{sec:general.non-Gaussian}. } 
Among probability distributions belonging to a specified uncertainty 
set defined as above, the worst-case distribution is the one with which 
the failure probability takes the maximum value. 
Our methodology is that we require a structure to satisfy the 
reliability constraint evaluated with the worst-case distribution. 
In other words, for {\em any\/} probability distribution belonging to 
the uncertainty set, the failure probability should be no greater than a 
specified target value. 
Thus, the methodology guarantees robustness of the structural 
reliability against uncertainty in the input distribution.\footnote{%
More precisely, the uncertainty here means the uncertainty in the 
expected value vector and the variance-covariance matrix of the input 
distribution. }
The major contribution of this paper is to show, under some assumptions, 
that this structural requirement is equivalently converted to a form of 
constraints that can be treated in conventional deterministic optimization. 
As a result, a design optimization problem under this structural 
requirement can be solved with a deterministic nonlinear 
optimization approach. 

Recently, RBDO methods with uncertainty in the input distribution have 
received considerable attention, because in practice it is often that 
the number of available samples of random variables is insufficient. 
For example, \citet{GP06} and \citet{YW08} proposed Bayesian approaches 
to compute the confidence that a structural design satisfies a target 
reliability constraint, when both a finite number of samples and 
probability distributions of uncertain parameters are available. 
\citet{NCLGL11,NCLGL11b} proposed Bayesian methods to adjust an input 
distribution model to limited data, with a given confidence level. 
When intervals of input variables are given as input information, 
\citet{ZRMM11} and \citet{ZM17} use a family of Johnson distributions to 
represent the uncertainty. 
\citet{CCGLLG16} and \citet{MCCGLG18} assume that the input distribution 
types and parameters follow probability distributions. 
The failure probability is therefore a random variable, 
the confidence level of a reliability constraint, i.e., the probability 
that the failure probability is no greater than a target value, is 
specified. 
To reduce computational cost of this method, \citet{JCL19} proposed a 
so-called reliability approach, inspired by the performance measure 
approach \citep{KL16,LCG10}. 
Subsequently, to further reduce computational cost, \citet{WHYWG20} 
proposed to use the second-order reliability method for computation of 
the failure probability. 
\citet{IKK18} assume that each of random variable follows a normal 
distribution with the mean and the variance modeled as random variables, 
and show that RBDO with a confidence level can be converted to a 
conventional form of RBDO by altering the target reliability index value. 
\citet{ZGXLE20} proposed to use the distributional probability box (the 
distributional p-box) \citep{SS17} for RBDO with limited data of 
uncertain variables. 
\citet{Kan19,Kan20} and \citet{JK20} proposed RBDO methods using order 
statistics. 
These methods, based on the order statistics, 
do not make any assumption on statistical information of 
uncertain parameters, and use random samples of uncertain parameters 
directly to guarantee confidence of the target reliability. 

As reviewed above, most of existing studies on RBDO with uncertainty in the 
input distribution \citep{CCGLLG16,IKK18,MCCGLG18,JCL19,WHYWG20} 
consider probabilistic models of input distribution parameters and/or 
distribution types. 
Accordingly, a confidence level evaluates how the satisfaction of 
structural reliability is reliable. 
In contrast, in this paper we consider a possibilistic model of 
input distribution parameters. 
Hence, what this approach guarantees is a level of robustness 
\citep{Bh06} of the satisfaction of structural reliability. 
A possibilistic model might be, in general, less information-sensitive, 
and hence useful when reliable statistical information of input 
distribution parameters is unavailable. 

From another perspective referring to \citet{SS17}, the uncertainty 
model treated in this paper can be viewed as follows. 
Uncertainty in a structural system is often divided into aleatory 
uncertainty and epistemic uncertainty \citep{OHJWF04}. 
Aleatory uncertainty, i.e., natural variability, is reflected by an 
(uncertain) input distribution. 
Epistemic uncertainty, i.e., state-of-knowledge uncertainty, is 
reflected by uncertainty in the input distribution moments. 
Thus, in our model, aleatory uncertainty is probabilistic, while 
epistemic uncertainty is possibilistic. 
In other words, state-of-knowledge uncertainty is represented as an 
uncertainty set of the input distribution moments. 

Throughout the paper, we assume that only design variables possess 
uncertainty, and that variation of a performance requirement can be 
approximated as a linear function of uncertain perturbations of the 
design variables. 
Also, we do not consider an optimization problem with variation of 
structural topology. 
As for an uncertainty model of moments of the input distribution, we consider 
two concrete convex sets. 
We show that the robust reliability constraint, i.e., constraint that 
the structural reliability is no less than a specified value for any 
possible realizations of input distribution moments, can be reduced to a 
system of nonlinear matrix inequalities. 
This reduction essentially follows the idea presented by 
\citet{EOO03} for computing the worst-case value-at-risk in 
financial engineering.\footnote{%
Diverse extensions of the methodology in 
\citet{EOO03} can be found in literature on so-called 
{\em distributionally robust optimization\/} \citep{DY10,GS10,WKS14}. } 
We can deal with nonlinear matrix inequality constraints 
within the framework of {\em nonlinear semidefinite programming\/} 
(nonlinear SDP) \citep{YY15}. 
In this manner, we can convert an RBDO problem under uncertainty in the 
input distribution moments to a deterministic optimization problem. 
It is worth noting that there exist several applications of 
linear and nonlinear SDPs, as well as eigenvalue optimization, 
to robust design optimization of structures 
\citep{BtN97,KT06,GBZG09,GDG11,TNKK11,HTK15,KZ16,THK17,Kan18}.


The paper is organized as follows. 
In section~\ref{sec:fundamental}, we consider the reliability constraint 
when the input distribution is precisely known, and show some 
fundamental properties. 
Section~\ref{sec:robust} presents the main result; we consider 
uncertainty in the expected value vector and the variance-covariance 
matrix of the input distribution, and examine the constraint that, for 
all possible realizations of the input distribution, the failure 
probability is no greater than a specified value. 
Section~\ref{sec:general} discusses some extensions of the obtained result. 
Section~\ref{sec:ex} presents the results of numerical experiments. 
Section~\ref{sec:conclusion} presents some conclusions.


In our notation, ${}^{\top}$ denotes the transpose of a vector or matrix. 
All vectors are column vectors. 
We use $I$ to denote the identity matrix. 
For two matrices $X=(X_{ij}) \in \Re^{m \times n}$ and 
$Y = (Y_{ij}) \in \Re^{m \times n}$, 
we denote by $X \bullet Y$ the inner product of $X$ and $Y$ defined by 
$X \bullet Y = \tr(X^{\top} Y) = \sum_{i=1}^{n}\sum_{j=1}^{n}X_{ij}Y_{ij}$. 
For a vector $\bi{x} = (x_{i}) \in \Re^{n}$, the notation 
$\| \bi{x} \|_{1}$, $\|\bi{x}\|_{2}$, and $\| \bi{x} \|_{\infty}$ 
designate its $\ell_{1}$-, $\ell_{2}$-, and $\ell_{\infty}$-norms, 
respectively, i.e., 
\begin{align*}
  \| \bi{x} \|_{1}
  &= |x_{1}| + |x_{2}| + \dots + |x_{n}| , \\
  \| \bi{x} \|_{2} 
  &= \sqrt{\bi{x}^{\top} \bi{x}} , \\
  \| \bi{x} \|_{\infty} 
  &= \max \{ |x_{1}|,|x_{2}|,\dots,|x_{n}| \} . 
\end{align*}
For a matrix $X =(X_{ij}) \in \Re^{m \times n}$, define matrix norms 
$\| X \|_{1,1}$, $\| X \|_{\rr{F}}$, and $\| X \|_{\infty,\infty}$ by 
\begin{align*}
  \| X \|_{1,1} 
  &= \sum_{i=1}^{n} \sum_{j=1}^{n} |X_{ij}| , \\
  \| X \|_{\rr{F}} 
  &= \sqrt{X \bullet X} , \\
  \| X \|_{\infty,\infty} 
  &= \max\{ |X_{ij}| 
  \mid i=1,\dots,m, \ j=1,\dots,n \} . 
\end{align*}
Let $\SC^{n}$ denote the set of $n \times n$ symmetric matrices. 
We write $Z \succeq 0$ if $Z \in \SC^{n}$ is positive semidefinite. 
Define $\SC_{+}^{n}$ by 
$\SC_{+}^{n} = \{ Z \in \SC^{n} \mid Z \succeq 0 \}$. 
For a positive definite matrix $Z \in \SC^{n}$, the notation $Z^{1/2}$ 
designates its symmetric square root, i.e., $Z^{1/2} \in \SC^{n}$ 
satisfying $Z^{1/2} Z^{1/2} = Z$. 
We use $Z^{-1/2}$ to denote the inverse matrix of $Z^{1/2}$. 
We use $\NC(\bi{\mu}, \varSigma)$ to denote the multivariate normal 
distribution with an expected value vector $\bi{\mu}$ and a 
variance-covariance matrix $\varSigma$. 
For a random variable $x \in \Re$, its expected value and variance are 
denoted by $\rr{E}[x]$ and $\rr{Var}[x] = \rr{E}[(x - \rr{E}[x])^{2}]$, 
respectively.

\section{Reliability constraint with specified moments}
\label{sec:fundamental}

In this section, we assume that the expected value 
vector and the variance-covariance matrix of the probability 
distribution of the design variable vector are precisely known. 
We first recall the reliability constraint, and then derive its 
alternative expression that will be used in section~\ref{sec:robust} to 
address uncertainty in the probability distribution. 

Let $\bi{x} \in \Re^{n}$ denote a design variable vector, where $n$ is 
the number of design variables. 
Assume that performance requirement in a design optimization problem 
is written in the form 
\begin{align}
  g(\bi{x}) \le 0 , 
  \label{eq:performance.constraint}
\end{align}
where $g : \Re^{n} \to \Re$ is differentiable. 
For simplicity, suppose that the design optimization problem has only 
one constraint; the case where more than one constraints exist will be 
discussed in section~\ref{sec:general}. 

Assume that $\bi{x}$ is decomposed additively as 
\begin{align}
  \bi{x} = \tilde{\bi{x}} + \bi{\zeta} , 
\end{align}
where $\bi{\zeta}$ is a random vector and $\tilde{\bi{x}}$ is a constant 
(i.e., non-random) vector. 
Therefore, in a design optimization problem considered in this paper, 
the decision variable to be optimized is $\tilde{\bi{x}}$. 
We use $\bi{\mu} \in \Re^{n}$ and $\varSigma \in \SC^{n}$ to denote the 
expected value vector and the variance-covariance matrix of $\bi{\zeta}$, 
respectively, i.e., 
\begin{align*}
  \bi{\mu} &= \rr{E}[\bi{\zeta}] , \\
  \varSigma 
  &= \rr{E} \bigl[ (\bi{\zeta}-\rr{E}[\bi{\zeta}]) 
  (\bi{\zeta}-\rr{E}[\bi{\zeta}])^{\top} \bigr] . 
\end{align*}
It is worth noting that $\varSigma$ is positive definite. 
Throughout the paper, we assume that, among parameters in a structural 
system, only $\bi{\zeta}$ possesses uncertainty. 
Also, we restrict ourselves to optimization without change of 
structural topology; i.e., we do not consider topology 
optimization.\footnote{%
In topology optimization, it would be proper to consider the design 
variables of removed structural elements as non-random variables. 
In this paper we do not discuss this issue. }

For simplicity and clarity of discussion, we assume 
$\bi{\zeta} \sim \NC(\bi{\mu}, \varSigma)$ in 
section~\ref{sec:fundamental} and section~\ref{sec:robust}. 
In fact, the results established in these sections can be extended to 
the case that the type of probability distribution is unknown; 
we then require that the reliability constraint should be satisfied for 
{\em any\/} probability distribution with moments belonging to a 
specified set. 
We defer this case until section~\ref{sec:general}. 

Since $\bi{x}$ is a random vector, $g(\bi{x})$ is a random variable. 
Therefore, constraint \eqref{eq:performance.constraint} should be 
considered in a probabilistic sense, which yields the reliability 
constraint 
\begin{align}
  \rr{P}_{\NC(\bi{\mu},\varSigma)} \{ g(\bi{x}) \le 0 \} 
  \ge 1 - \epsilon . 
  \label{eq:reliability.constraint.1}
\end{align}
Here, $\epsilon \in ]0,1]$ is the specified upper bound for the failure 
probability. 
Let $g^{\rr{lin}}(\bi{x})$ denote the first-order approximation of 
$g(\bi{x})$ centered at $\bi{x}=\tilde{\bi{x}}$, i.e., 
\begin{align*}
  g^{\rr{lin}}(\bi{x}) 
  = g(\tilde{\bi{x}}) + \nabla g(\tilde{\bi{x}})^{\top} \bi{\zeta} 
  \,(\simeq  g(\bi{x}) ) . 
\end{align*}
Throughout the paper, we consider an approximation of constraint 
\eqref{eq:reliability.constraint.1} 
\begin{align}
  \rr{P}_{\NC(\bi{\mu},\varSigma)} \{
  g^{\rr{lin}}(\bi{x}) \le 0  
  \} 
  \ge 1 - \epsilon , 
  \label{eq:reliability.constraint.2.0}
\end{align}
i.e., 
\begin{align}
  \rr{P}_{\NC(\bi{\mu},\varSigma)} \{
  g(\tilde{\bi{x}}) + \nabla g(\tilde{\bi{x}})^{\top} \bi{\zeta}  \le 0  
  \} 
  \ge 1 - \epsilon . 
  \label{eq:reliability.constraint.2}
\end{align}
Therefore, the corresponding RBDO problem has the following form: 
\begin{subequations}\label{P.reliability.1}%
  \begin{alignat}{3}
    & \MIN
    &{\quad}& 
    f(\bi{x}) \\
    & \ST && 
    \bi{x} \in X , 
    \label{P.reliability.1.3} \\
    & && 
    \rr{P}_{\NC(\bi{\mu},\varSigma)} \{
    g(\tilde{\bi{x}}) + \nabla g(\tilde{\bi{x}})^{\top} \bi{\zeta}  \le 0  
    \}
    \ge 1 - \epsilon . 
  \end{alignat}
\end{subequations}
Here, $f : \Re^{n} \to \Re$ is the objective function, 
$X \subseteq \Re^{n}$ is a given closed set, and constraint $\bi{x} \in X$ 
corresponds to, e.g., the side constraints on the design variables.

From the basic property of the normal distribution, we can readily 
obtain the following reformulation of the reliability constraint. 
\begin{theorem}\label{thm:reliability.constraint}
  Define $\kappa$ by 
  \begin{align*}
    \kappa = - \varPhi^{-1}(\epsilon) , 
  \end{align*}
  where $\varPhi$ is the (cumulative) distribution function of the 
  standard normal distribution $\NC(0,1)$. 
  Then, $\tilde{\bi{x}} \in X$ satisfies 
  \eqref{eq:reliability.constraint.2} if and only if it satisfies 
  \begin{align}
    g(\tilde{\bi{x}}) 
    + \nabla g(\tilde{\bi{x}})^{\top} \bi{\mu} 
    + \kappa 
    \| \varSigma^{1/2} \nabla g(\tilde{\bi{x}}) \|_{2} \le 0 . 
    \label{eq:reliability.constraint.3}
  \end{align}
\end{theorem}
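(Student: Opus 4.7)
The plan is to exploit the fact that an affine function of a Gaussian random vector is again Gaussian, and then reduce the inequality involving $\rr{P}_{\NC(\bi{\mu},\varSigma)}\{\,\cdot\,\}$ to an inequality involving $\varPhi$ by standardization.

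First I would introduce the scalar random variable
\begin{align*}
  Y = g(\tilde{\bi{x}}) + \nabla g(\tilde{\bi{x}})^{\top} \bi{\zeta},
\end{align*}
and read off its mean and variance from $\bi{\zeta} \sim \NC(\bi{\mu},\varSigma)$, namely
\begin{align*}
  m &= g(\tilde{\bi{x}}) + \nabla g(\tilde{\bi{x}})^{\top} \bi{\mu}, \\
  s^{2} &= \nabla g(\tilde{\bi{x}})^{\top} \varSigma \nabla g(\tilde{\bi{x}})
  = \| \varSigma^{1/2} \nabla g(\tilde{\bi{x}}) \|_{2}^{2}.
\end{align*}
Since affine images of Gaussians are Gaussian, $Y \sim \NC(m, s^{2})$. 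Assume first that $\nabla g(\tilde{\bi{x}}) \neq \bi{0}$; positive definiteness of $\varSigma$ then gives $s > 0$, and $(Y - m)/s \sim \NC(0,1)$, so
\begin{align*}
  \rr{P}_{\NC(\bi{\mu},\varSigma)}\{Y \le 0\}
  = \varPhi\!\left(-\frac{m}{s}\right).
\end{align*}

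Next I would invert this identity to finish. The reliability constraint \eqref{eq:reliability.constraint.2} becomes $\varPhi(-m/s) \ge 1 - \epsilon$, equivalently $-m/s \ge \varPhi^{-1}(1-\epsilon)$ by monotonicity of $\varPhi$. The symmetry of the standard normal gives $\varPhi^{-1}(1-\epsilon) = -\varPhi^{-1}(\epsilon) = \kappa$, so the constraint rewrites as $m + \kappa s \le 0$, which is exactly \eqref{eq:reliability.constraint.3}. Every step in the chain is an equivalence, which yields both directions of the ``if and only if''.

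Finally I would dispose of the degenerate case $\nabla g(\tilde{\bi{x}}) = \bi{0}$ separately: then $Y \equiv g(\tilde{\bi{x}})$ is deterministic and $s = 0$, so both \eqref{eq:reliability.constraint.2} (with $\epsilon \in \mathopen{]}0,1\mathclose{]}$) and \eqref{eq:reliability.constraint.3} collapse to $g(\tilde{\bi{x}}) \le 0$. There is no real obstacle in this proof; the only subtlety is the identity $\varPhi^{-1}(1-\epsilon) = -\varPhi^{-1}(\epsilon)$ together with attention to the boundary case of a vanishing gradient, both of which are handled in passing.
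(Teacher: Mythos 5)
Your proof is correct and follows essentially the same route as the paper's: standardize the Gaussian linear form $g^{\rr{lin}}(\bi{x})$, compute its mean and variance, and use $\varPhi^{-1}(1-\epsilon) = -\varPhi^{-1}(\epsilon)$. Your explicit treatment of the degenerate case $\nabla g(\tilde{\bi{x}}) = \bi{0}$ is a small point of extra care that the paper's proof passes over silently.
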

\begin{proof}
  Since $g^{\rr{lin}}(\bi{x})$ follows the normal distribution, 
  it is standardized by 
  \begin{align*}
    z 
    = \frac{g^{\rr{lin}}(\bi{x}) - \rr{E}[g^{\rr{lin}}(\bi{x})]}
    {\sqrt{\rr{Var}[g^{\rr{lin}}(\bi{x})]}}
    \sim \NC(0,1) . 
  \end{align*}
  By using this relation, we can eliminate 
  $g^{\rr{lin}}(\bi{x})$ from \eqref{eq:reliability.constraint.2.0} 
  (i.e., \eqref{eq:reliability.constraint.2}) as 
  \begin{align*}
    \rr{P}_{\NC(0,1)} \left\{
    z \le 
    - \frac{\rr{E}[g^{\rr{lin}}(\bi{x})]}{\sqrt{\rr{Var}[g^{\rr{lin}}(\bi{x})]}}
    \right\} 
    \ge 1 - \epsilon . 
  \end{align*}
  This inequality is equivalently rewritten by using the distribution 
  function $\varPhi$ as 
  \begin{align*}
    - \frac{\rr{E}[g^{\rr{lin}}(\bi{x})]}{\sqrt{\rr{Var}[g^{\rr{lin}}(\bi{x})]}}
    \ge \varPhi^{-1}(1-\epsilon) = -\varPhi^{-1}(\epsilon) . 
  \end{align*}
  By direct calculations, we see that the expected value of 
  $g^{\rr{lin}}(\bi{x})$ is 
  \begin{align*}
    \rr{E}[g^{\rr{lin}}(\bi{x})] 
    = g(\tilde{\bi{x}}) 
    + \nabla g(\tilde{\bi{x}})^{\top} \bi{\mu}  
  \end{align*}
  and the variance is 
  \begin{align*}
    \rr{Var}[g^{\rr{lin}}(\bi{x})] 
    &= \rr{E}
    \bigl[ (g^{\rr{lin}}(\bi{x}) - \rr{E}[g^{\rr{lin}}(\bi{x})])^{2} \bigr] 
    \notag\\
    &= \rr{E}
    \bigl[ \bigl(
    (g(\tilde{\bi{x}}) + \nabla g(\tilde{\bi{x}})^{\top} \bi{\zeta})
    - (g(\tilde{\bi{x}}) + \nabla g(\tilde{\bi{x}})^{\top} \bi{\mu})
    \bigr)^{2} \bigr] 
    \notag\\
    &= \rr{E} \bigl[ 
    \bigl(
    \nabla g(\tilde{\bi{x}})^{\top} (\bi{\zeta} - \bi{\mu})
    \bigr)^{2}
    \bigr] 
    \notag\\
    &= \rr{E} \bigl[ 
    \nabla g(\tilde{\bi{x}})^{\top} (\bi{\zeta} - \bi{\mu}) 
    (\bi{\zeta} - \bi{\mu})^{\top} \nabla g(\tilde{\bi{x}}) 
    \bigr] 
    \notag\\
    &= \nabla g(\tilde{\bi{x}})^{\top} \rr{E} \bigl[ 
    (\bi{\zeta} - \bi{\mu}) 
    (\bi{\zeta} - \bi{\mu})^{\top}
    \bigr]  \nabla g(\tilde{\bi{x}}) 
    \notag\\
    &= \nabla g(\tilde{\bi{x}})^{\top} \varSigma 
    \nabla g(\tilde{\bi{x}}) , 
  \end{align*}
  which concludes the proof. 
\end{proof}

In section~\ref{sec:robust}, we deal with the case in which $\bi{\mu}$  
and $\varSigma$ are known imprecisely. 
To do this, we reformulate 
$\kappa \| \varSigma^{1/2} \nabla g(\tilde{\bi{x}}) \|_{2}$ in 
\eqref{eq:reliability.constraint.3} into a form suitable for analysis. 
The following theorem is obtained in the same manner as 
\citet[Theorem~1]{EOO03}. 

\begin{theorem}\label{thm:SDP.duality}
  For $\kappa > 0$, $\varSigma \in \SC_{+}^{n}$, and 
  $\nabla g(\tilde{\bi{x}}) \in \Re^{n}$, we have 
  \begin{align*}
    \kappa \| \varSigma^{1/2} \nabla g(\tilde{\bi{x}}) \|_{2} 
    = \min_{\varLambda \in \SC^{n}, \, z \in \Re}
    \left\{
    \varSigma \bullet \varLambda 
    + \kappa^{2} z 
    \left|
    \begin{bmatrix}
      \varLambda & \nabla g(\tilde{\bi{x}})/2 \\
      \nabla g(\tilde{\bi{x}})^{\top}/2 & z \\
    \end{bmatrix}
    \succeq 0 
    \right.
    \right\} . 
  \end{align*}
\end{theorem}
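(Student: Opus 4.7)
The plan is to prove the identity by direct manipulation using the Schur complement, rather than invoking a black-box SDP duality theorem, since the expression is simple enough to handle concretely. Write $\bi{a} = \nabla g(\tilde{\bi{x}})$ for brevity. I would first dispose of the degenerate case $\bi{a}=0$: the left-hand side vanishes, and on the right the feasible set contains $(\varLambda,z)=(0,0)$ which also gives objective $0$, while any feasible pair forces $z\ge 0$ and $\varLambda\succeq 0$, so the minimum is $0$.

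For $\bi{a}\neq 0$, I would exploit the Schur complement characterization of positive semidefiniteness of the $(n+1)\times(n+1)$ block matrix. Observe first that the off-diagonal block $\bi{a}/2$ being nonzero forces any feasible $z$ to be strictly positive (otherwise the second diagonal block vanishes and positive semidefiniteness of the block matrix would require the off-diagonal block to be zero). Given $z>0$, the block matrix is $\succeq 0$ if and only if $\varLambda \succeq \frac{\bi{a}\bi{a}^{\top}}{4z}$. Since $\varSigma\succeq 0$, this yields
\begin{align*}
  \varSigma\bullet\varLambda + \kappa^{2} z
  \;\ge\; \varSigma\bullet \frac{\bi{a}\bi{a}^{\top}}{4z} + \kappa^{2} z
  \;=\; \frac{\bi{a}^{\top}\varSigma\bi{a}}{4z} + \kappa^{2} z ,
\end{align*}
so the two-variable problem collapses to a scalar minimization over $z>0$.

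Next I would minimize $\frac{\bi{a}^{\top}\varSigma\bi{a}}{4z} + \kappa^{2} z$ on $z>0$ by the arithmetic-geometric mean inequality (or by setting the derivative to zero), which gives lower bound $2\sqrt{\tfrac{1}{4}\kappa^{2}\bi{a}^{\top}\varSigma\bi{a}} = \kappa\,\|\varSigma^{1/2}\bi{a}\|_{2}$, attained at $z^{\star} = \|\varSigma^{1/2}\bi{a}\|_{2}/(2\kappa)$. This establishes the $\ge$ direction. For equality, I would exhibit the minimizer: take $z=z^{\star}$ and $\varLambda^{\star} = \bi{a}\bi{a}^{\top}/(4z^{\star}) = \kappa\, \bi{a}\bi{a}^{\top}/(2\|\varSigma^{1/2}\bi{a}\|_{2})$, check that the resulting block matrix is positive semidefinite (it has the rank-one form $\bi{u}\bi{u}^{\top}$ with $\bi{u}=(\bi{a}/(2\sqrt{z^{\star}}),\sqrt{z^{\star}})$, hence $\succeq 0$), and verify by substitution that the objective equals $\kappa\,\|\varSigma^{1/2}\bi{a}\|_{2}$. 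This also shows the minimum is attained, justifying writing $\min$ rather than $\inf$ in the theorem statement.

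I do not expect any serious obstacle here; the only mildly delicate points are the handling of the boundary cases ($\bi{a}=0$, and ensuring $z>0$ when $\bi{a}\neq 0$ so that the Schur complement is legitimate) and being careful that $\varSigma\succeq 0$ (rather than $\succ 0$) suffices for the step $\varSigma\bullet\varLambda \ge \varSigma\bullet(\bi{a}\bi{a}^{\top}/(4z))$, which follows from the fact that $\varLambda - \bi{a}\bi{a}^{\top}/(4z) \succeq 0$ implies $\varSigma\bullet(\varLambda - \bi{a}\bi{a}^{\top}/(4z)) \ge 0$ for any $\varSigma\succeq 0$. An alternative, more conceptual, route would be to invoke strong SDP duality directly on the right-hand problem and identify its dual with a trust-region-type problem whose optimum is manifestly $\kappa\,\|\varSigma^{1/2}\bi{a}\|_{2}$, but the elementary Schur-complement argument above seems both shorter and self-contained.
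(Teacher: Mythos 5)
Your argument is correct, and it takes a genuinely different route from the paper's. The paper proves the identity by duality: it first shows that $\kappa\,\|\varSigma^{1/2}\nabla g(\tilde{\bi{x}})\|_{2}$ equals the worst-case perturbation value $\max\{\nabla g(\tilde{\bi{x}})^{\top}\bi{\zeta} \mid \|\varSigma^{-1/2}\bi{\zeta}\|_{2}=\kappa\}$ (via Lagrange multipliers), rewrites that maximization as a linear SDP using the Schur complement, and then identifies the stated expression as the Lagrangian dual of that SDP, closing the gap with strong SDP duality. You instead attack the right-hand side directly: the Schur complement reduces feasibility to $z>0$ together with $\varLambda\succeq \bi{a}\bi{a}^{\top}/(4z)$, monotonicity of $\varSigma\bullet(\cdot)$ with respect to the semidefinite order (valid for any $\varSigma\succeq 0$) collapses the problem to a scalar minimization in $z$, and AM--GM finishes, with an explicit rank-one minimizer exhibited. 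Your proof is shorter, self-contained, and avoids invoking a black-box duality theorem; what the paper's derivation buys is the explicit primal interpretation of the quantity as a worst-case directional perturbation over an ellipsoid, which mirrors the derivation of El Ghaoui et al.\ that the surrounding development follows and sets up the minimax argument used afterwards. One small caveat: your closing claim that the minimum is always attained fails in the degenerate case $\nabla g(\tilde{\bi{x}})\neq\bi{0}$ but $\varSigma^{1/2}\nabla g(\tilde{\bi{x}})=\bi{0}$, which the hypothesis $\varSigma\in\SC_{+}^{n}$ permits; there the infimum is $0$ while your $z^{\star}$ equals $0$ and is infeasible, so only $\inf$ is justified. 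This is an artifact of the theorem's weak hypothesis rather than of your method --- the paper's own proof divides by $\|\varSigma^{1/2}\nabla g(\tilde{\bi{x}})\|_{2}$ and elsewhere assumes $\varSigma$ positive definite, under which your argument is complete as written.
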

\begin{proof}
  We first show that the left side of the equation can be reduced to 
  \begin{align}
    \kappa  \| \varSigma^{1/2} \nabla g(\tilde{\bi{x}}) \|_{2} 
    &= \max_{\bi{\zeta} \in \Re^{n}} 
    \{ \nabla g(\tilde{\bi{x}})^{\top} \bi{\zeta} 
    \mid
    \| \varSigma^{-1/2} \bi{\zeta} \|_{2} = \kappa 
    \} . 
    \label{eq:evaluate.dual.1}
  \end{align}
  To see this, we apply the Lagrange multiplier method to the equality 
  constrained maximization problem on the right side of 
  \eqref{eq:evaluate.dual.1}. 
  Namely, the Lagrangian $L_{1}: \Re^{n} \times \Re \to \Re$ is defined by 
  \begin{align*}
    L_{1}(\bi{\zeta}; \mu) 
    = \nabla g(\tilde{\bi{x}})^{\top} \bi{\zeta} 
    + \frac{\mu}{2} 
    (\kappa^{2} - \bi{\zeta}^{\top} \varSigma^{-1} \bi{\zeta} ) , 
  \end{align*}
  where $\mu \in \Re$ is the Lagrange multiplier. 
  The stationarity condition of $L_{1}$ is 
  \begin{align*}
    \pdif{L_{1}}{\bi{\zeta}} 
    &= \nabla g(\tilde{\bi{x}}) - \mu \varSigma^{-1} \bi{\zeta} = \bi{0} , \\
    \pdif{L_{1}}{\mu} 
    &= \kappa^{2} - \bi{\zeta}^{\top} \varSigma^{-1} \bi{\zeta} = 0 . 
  \end{align*}
  By solving this stationarity condition, we can find that 
  \begin{align*}
    \bi{\zeta} = \dfrac{1}{\mu} \varSigma \nabla g(\tilde{\bi{x}}) , 
    \quad
    \mu 
    = \frac{\| \varSigma^{1/2} \nabla g(\tilde{\bi{x}}) \|_{2}}{\kappa}
  \end{align*}
  are optimal. Hence, the optimal value is 
  \begin{align*}
    \nabla g(\tilde{\bi{x}})^{\top} \bi{\zeta} 
    = \nabla g(\tilde{\bi{x}})^{\top} 
    \frac{\kappa \varSigma \nabla g(\tilde{\bi{x}})}
    {\| \varSigma^{1/2} \nabla g(\tilde{\bi{x}}) \|_{2}} , 
  \end{align*}
  which is reduced to the left side of \eqref{eq:evaluate.dual.1}. 
  
  Next, observe that the right side of \eqref{eq:evaluate.dual.1} is 
  further reduced to 
  \begin{align}
    \MoveEqLeft
    \max_{\bi{\zeta} \in \Re^{n}} 
    \{ \nabla g(\tilde{\bi{x}})^{\top} \bi{\zeta} 
    \mid
    \| \varSigma^{-1/2} \bi{\zeta} \|_{2} = \kappa  \}   \notag\\
    &= \max_{\bi{\zeta} \in \Re^{n}} 
    \{ \nabla g(\tilde{\bi{x}})^{\top} \bi{\zeta} 
    \mid
    \| \varSigma^{-1/2} \bi{\zeta} \|_{2} \le \kappa 
    \}  \notag\\
    &= \max_{\bi{\zeta} \in \Re^{n}} 
    \left\{ \nabla g(\tilde{\bi{x}})^{\top} \bi{\zeta} 
    \left|
    \begin{bmatrix}
      \varSigma & \bi{\zeta} \\
      \bi{\zeta}^{\top} & \kappa^{2}
    \end{bmatrix}
    \succeq 0 
    \right. \right\} . 
    \label{eq:evaluate.dual.2}
  \end{align}
  Here, the last equality follows from the  fact that the positive 
  semidefinite constraint is equivalent to the nonnegative constraint on 
  the Schur complement of $\varSigma$ in the corresponding matrix, i.e., 
  $\kappa^{2} - \bi{\zeta}^{\top} \varSigma^{-1} \bi{\zeta} \ge 0$; 
  see \cite[appendix~A.5.5]{BV04}. 
  It is worth noting that the last expression in 
  \eqref{eq:evaluate.dual.2} is an SDP problem. 
  
  Finally, we shall show that the right side of the proposition in this 
  theorem corresponds to the dual problem of the SDP 
  problem in \eqref{eq:evaluate.dual.2}. 
  Since this dual problem is strictly feasible, 
  the proposition follows from the strong 
  duality of SDP \citep[section~11.3]{CEg14}. 
  We can derive the dual problem of \eqref{eq:evaluate.dual.2} as follows. 
  The Lagrangian is defined by 
  \begin{align}
    L_{2}(\bi{\zeta}; \varLambda, \bi{\lambda}, z) = 
    \begin{dcases*}
      \nabla g(\tilde{\bi{x}})^{\top} \bi{\zeta} + 
      \begin{bmatrix}
        \varLambda & \bi{\lambda} \\
        \bi{\lambda}^{\top} & z \\
      \end{bmatrix}
      \bullet
      \begin{bmatrix}
        \varSigma & \bi{\zeta} \\
        \bi{\zeta}^{\top} & \kappa^{2}
      \end{bmatrix}
      & if 
      $\begin{bmatrix}
        \varLambda & \bi{\lambda} \\
        \bi{\lambda}^{\top} & z \\
      \end{bmatrix} \succeq 0$, \\
      +\infty
      & otherwise, 
    \end{dcases*}
    \label{eq:SDP.duality.Lagrangian}
  \end{align}
  where $z \in \Re$, $\bi{\lambda} \in \Re^{n}$, 
  and $\varLambda \in \SC^{n}$ are the Lagrange multipliers. 
  Indeed, since the positive semidefinite cone 
  satisfies \citep[Fact~1.3.17]{Kan11}
  \begin{align}
    \inf_{S \in \SC^{n}} 
    \{ S \bullet T \mid S \succeq 0 \} = 
    \begin{dcases*}
      0 & if $T \succeq 0$, \\
      -\infty & otherwise, 
    \end{dcases*}
    \label{eq:PSD.self-duality}
  \end{align}
  we can confirm that the SDP problem in \eqref{eq:evaluate.dual.2} 
  is equivalent to 
  \begin{align*}
    \max_{\bi{\zeta}} 
    \inf_{\varLambda,\, \bi{\lambda},\, z} 
     L_{2}(\bi{\zeta}; \varLambda, \bi{\lambda}, z) . 
  \end{align*}
  The dual problem is defined by 
  \begin{align}
    \min_{\varLambda,\, \bi{\lambda},\, z} 
    \sup_{\bi{\zeta}} 
     L_{2}(\bi{\zeta}; \varLambda, \bi{\lambda}, z) . 
    \label{eq:SDP.duality.dual.problem}
  \end{align}
  Since \eqref{eq:SDP.duality.Lagrangian} can be rewritten as 
  \begin{align*}
    L_{2}(\bi{\zeta}; \varLambda, \bi{\lambda}, z) = 
    \begin{dcases*}
      (\nabla g(\tilde{\bi{x}}) + 2\bi{\lambda})^{\top} \bi{\zeta} 
      + \varSigma \bullet \varLambda  + \kappa^{2} z
      & if 
      $\begin{bmatrix}
        \varLambda & \bi{\lambda} \\
        \bi{\lambda}^{\top} & z \\
      \end{bmatrix} \succeq 0$, \\
      +\infty
      & otherwise, 
    \end{dcases*}
  \end{align*}
  we have 
  \begin{align*}
    \sup_{\bi{\zeta}} 
    L_{2}(\bi{\zeta}; \varLambda, \bi{\lambda}, z) = 
    \begin{dcases*}
      \varSigma \bullet \varLambda  + \kappa^{2} z
      & if 
      $\begin{bmatrix}
        \varLambda & \bi{\lambda} \\
        \bi{\lambda}^{\top} & z \\
      \end{bmatrix} \succeq 0$, 
      $\nabla g(\tilde{\bi{x}}) + 2\bi{\lambda} = \bi{0}$, \\
      +\infty
      & otherwise. 
    \end{dcases*}
  \end{align*}
  Therefore, the dual problem in \eqref{eq:SDP.duality.dual.problem} 
  corresponds to the right side of the proposition of the theorem. 
\end{proof}

\section{Worst-case reliability under uncertainty in moments}
\label{sec:robust}

In this section, we consider the case that the moments (in this paper, the 
expected value vector and the variance-covariance matrix) of the design 
variable vector are uncertain, or not perfectly known. 
Specifically, they are only known to be in a given set, called 
the {\em uncertainty set\/}. 
We require that a structure satisfies the reliability constraint for 
{\em any\/} moments in the uncertainty set. 
In other words, we require that the failure probability in the worst 
case is not larger than a specified value. 
We show that this requirement can be converted to a form of conventional 
constraints in deterministic optimization. 

\subsection{Convex uncertainty model of moments}
\label{sec:robust.convex}

Let $U_{\bi{\mu}} \subset \Re^{n}$ and 
$U_{\varSigma} \subset \SC_{+}^{n}$ denote the uncertainty sets, i.e., 
the sets of all possible realizations, of $\bi{\mu}$ and $\varSigma$, 
respectively. 
Namely, we only know that $\bi{\mu}$ and $\varSigma$ satisfy 
\begin{align*}
  \bi{\mu}  &\in U_{\bi{\mu}} ,  \\
  \varSigma &\in U_{\varSigma} . 
\end{align*}
Assume that $U_{\bi{\mu}}$ and $U_{\varSigma}$ are compact convex sets. 
For notational simplicity, we write $(\bi{\mu},\varSigma) \in U$ 
if $\bi{\mu} \in U_{\bi{\mu}}$ and $\varSigma \in U_{\varSigma}$ hold. 

Recall that we are considering the reliability constraint in 
\eqref{eq:reliability.constraint.2} with a linearly approximated 
constraint function. 
The robust counterpart of \eqref{eq:reliability.constraint.2} against 
uncertainty in $\bi{\mu}$ and $\varSigma$ is formulated as 
\begin{align}
  \rr{P}_{\NC(\bi{\mu},\varSigma)} 
  \{ g(\tilde{\bi{x}}) + \nabla g(\tilde{\bi{x}})^{\top} \bi{\zeta}  \le 0 \} 
  \ge 1 - \epsilon  , 
  \quad \forall (\bi{\mu},\varSigma) \in U  , 
  \label{eq:robust.reliable.infinite}
\end{align}
i.e., we require that the reliability constraint should be satisfied for 
any normal distribution corresponding to possible realizations of 
$\bi{\mu}$ and $\varSigma$. 
This requirement is equivalently rewritten as 
\begin{align}
  \max_{(\bi{\mu},\varSigma) \in U} \bigl\{
  \rr{P}_{\NC(\bi{\mu},\varSigma)} 
  \{ g(\tilde{\bi{x}}) + \nabla g(\tilde{\bi{x}})^{\top} \bi{\zeta}  \le 0 \} 
  \bigr\}  \ge 1 - \epsilon  . 
  \label{eq:robust.reliable}
\end{align}
That is, the reliability constraint should be satisfied in the worst 
case. 

The following theorem presents, with the aid of 
\refthm{thm:reliability.constraint} and \refthm{thm:SDP.duality}, an 
equivalent reformulation of \eqref{eq:robust.reliable}. 

\begin{theorem}\label{thm:robust.general}
  $\tilde{\bi{x}} \in X$ satisfies \eqref{eq:robust.reliable} if and 
  only if there exists a pair of $z \in \Re$ and 
  $\varLambda \in \SC^{n}$ satisfying 
  \begin{align}
    & g(\tilde{\bi{x}})  
    + \max \{ \nabla g(\tilde{\bi{x}})^{\top} \bi{\mu} 
    \mid \bi{\mu} \in U_{\bi{\mu}} \} 
    + \max \{ \varSigma \bullet \varLambda  
    \mid \varSigma \in U_{\varSigma} \} 
    + \kappa^{2} z \le 0 , 
    \label{eq:robust.general.1} \\
    & 
    \begin{bmatrix}
      \varLambda & \nabla g(\tilde{\bi{x}})/2 \\
      \nabla g(\tilde{\bi{x}})^{\top}/2 & z \\
    \end{bmatrix}
    \succeq 0 . 
    \label{eq:robust.general.2}
  \end{align}
\end{theorem}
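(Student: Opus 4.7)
The plan is to reduce the worst-case probabilistic statement \eqref{eq:robust.reliable} to the finite-dimensional deterministic conditions \eqref{eq:robust.general.1}--\eqref{eq:robust.general.2} by chaining the two preceding theorems together with a convex minimax exchange. Since $U = U_{\bi{\mu}} \times U_{\varSigma}$ is a product set and the three summands appearing in \refthm{thm:reliability.constraint} depend on $\bi{\mu}$ and $\varSigma$ in an additively separable way, the worst case decomposes cleanly; the auxiliary variables $\varLambda$ and $z$ then arise as dual multipliers from \refthm{thm:SDP.duality}.

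For the sufficiency (``$\Leftarrow$'') direction, suppose $(\varLambda, z)$ satisfies \eqref{eq:robust.general.1}--\eqref{eq:robust.general.2} and fix any $(\bi{\mu},\varSigma) \in U$. \refthm{thm:SDP.duality} gives the pointwise bound
\begin{align*}
  \kappa \| \varSigma^{1/2} \nabla g(\tilde{\bi{x}}) \|_{2}
  \le \varSigma \bullet \varLambda + \kappa^{2} z
  \le \max_{\varSigma' \in U_{\varSigma}} \varSigma' \bullet \varLambda + \kappa^{2} z ,
\end{align*}
which, combined with $\nabla g(\tilde{\bi{x}})^{\top} \bi{\mu} \le \max_{\bi{\mu}' \in U_{\bi{\mu}}} \nabla g(\tilde{\bi{x}})^{\top} \bi{\mu}'$ and \eqref{eq:robust.general.1}, yields
\begin{align*}
  g(\tilde{\bi{x}}) + \nabla g(\tilde{\bi{x}})^{\top} \bi{\mu} + \kappa \| \varSigma^{1/2} \nabla g(\tilde{\bi{x}}) \|_{2} \le 0 .
\end{align*}
\refthm{thm:reliability.constraint} then identifies this with the reliability constraint for $\NC(\bi{\mu}, \varSigma)$, establishing \eqref{eq:robust.reliable}.

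For the necessity (``$\Rightarrow$'') direction, \refthm{thm:reliability.constraint} recasts \eqref{eq:robust.reliable} as the pointwise inequality displayed above for every $(\bi{\mu}, \varSigma) \in U$; by the product form of $U$, this is in turn equivalent to
\begin{align*}
  g(\tilde{\bi{x}})
  + \max_{\bi{\mu} \in U_{\bi{\mu}}} \nabla g(\tilde{\bi{x}})^{\top} \bi{\mu}
  + \max_{\varSigma \in U_{\varSigma}} \kappa \| \varSigma^{1/2} \nabla g(\tilde{\bi{x}}) \|_{2}
  \le 0 .
\end{align*}
Substituting \refthm{thm:SDP.duality} turns the last term into $\max_{\varSigma} \min_{(\varLambda,z)} [\varSigma \bullet \varLambda + \kappa^{2} z]$, where the inner min ranges over pairs satisfying \eqref{eq:robust.general.2}. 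The core step is Sion's minimax theorem: the integrand is bilinear (hence concave in $\varSigma$ and convex in $(\varLambda, z)$) and continuous, $U_{\varSigma}$ is compact convex, and the feasible set for $(\varLambda, z)$ is convex, so the max and the min may be exchanged. To extract an explicit witness $(\varLambda^{*}, z^{*})$ for \eqref{eq:robust.general.1}, I then show that the outer infimum is attained: the PSD constraint \eqref{eq:robust.general.2} forces $\varLambda \succeq 0$ and $z \ge 0$, and for any positive definite $\bar{\varSigma} \in U_{\varSigma}$ the uniform bound on $\bar{\varSigma} \bullet \varLambda + \kappa^{2} z$ along a minimizing sequence controls both $\tr \varLambda$ (via $\lambda_{\min}(\bar{\varSigma}) \tr \varLambda \le \bar{\varSigma} \bullet \varLambda$) and $z$, hence $\| \varLambda \|_{\rr{F}}$; a convergent subsequence then yields the minimizer.

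The main obstacle is the minimax exchange together with this attainment argument; the rest amounts to direct substitutions of the previously established theorems. An implicit hypothesis needed for the coercivity step is that $U_{\varSigma}$ contains at least one positive definite matrix, which is consistent with the running assumption that $\varSigma \succ 0$ made preceding \refthm{thm:SDP.duality}.
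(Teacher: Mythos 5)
Your proof is correct and follows essentially the same route as the paper's: \refthm{thm:reliability.constraint} to reduce the worst-case reliability constraint to the pointwise inequality $g(\tilde{\bi{x}}) + \nabla g(\tilde{\bi{x}})^{\top}\bi{\mu} + \kappa\|\varSigma^{1/2}\nabla g(\tilde{\bi{x}})\|_{2} \le 0$ over $U$, \refthm{thm:SDP.duality} to dualize the norm term into $\min_{(\varLambda,z)}\{\varSigma\bullet\varLambda+\kappa^{2}z\}$ subject to \eqref{eq:robust.general.2}, and a minimax exchange justified by compactness and convexity of $U$ together with bilinearity of the objective. The only substantive addition is your coercivity argument establishing that the infimum over $(\varLambda,z)$ is attained (needed to extract an explicit witness in the boundary case where the optimal value is exactly zero); the paper simply writes ``$\min$'' and leaves this implicit, so your remark that attainment hinges on $U_{\varSigma}$ containing a positive definite matrix is a minor but genuine tightening rather than a different approach.
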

\begin{proof}
  It follows from \refthm{thm:reliability.constraint} that 
  \eqref{eq:robust.reliable} is equivalent to 
  \begin{align}
    \max_{(\bi{\mu},\varSigma) \in U} 
    \{   g(\tilde{\bi{x}}) 
    + \nabla g(\tilde{\bi{x}})^{\top} \bi{\mu} 
    + \kappa 
    \| \varSigma^{1/2} \nabla g(\tilde{\bi{x}}) \| \} \le 0 .  
    \label{eq:robust.constraint.3}
  \end{align}
  Furthermore, application of \refthm{thm:SDP.duality} yields 
  \begin{align}
    \max_{(\bi{\mu},\varSigma) \in U} 
    \left\{
    g(\tilde{\bi{x}}) 
    + \nabla g(\tilde{\bi{x}})^{\top} \bi{\mu} 
    + \min_{z, \, \varLambda}
    \left\{
    \varSigma \bullet \varLambda 
    + \kappa^{2} z 
    \left|
    \begin{bmatrix}
      \varLambda & \nabla g(\tilde{\bi{x}})/2 \\
      \nabla g(\tilde{\bi{x}})^{\top}/2 & z \\
    \end{bmatrix}
    \succeq 0 
    \right.
    \right\}
    \right\}
    \le 0 . 
    \label{eq:robust.general.3}
  \end{align}
  In the expression above, we see that $U$ is compact and convex, and the 
  feasible set for the minimization is convex. 
  Also, the objective function is linear in $\bi{\mu}$ and $\varSigma$ 
  for fixed $z$ and $\varLambda$, and is linear in $z$ and $\varLambda$ 
  for fixed $\bi{\mu}$ and $\varSigma$. 
  Therefore, the minimax theorem \citep[Theorem~8.8]{CEg14} asserts that 
  \eqref{eq:robust.general.3} is equivalent to 
  \begin{align*}
    g(\tilde{\bi{x}})  + 
    \min_{z, \, \varLambda} 
    \max_{(\bi{\mu},\varSigma) \in U} 
    \left\{
    \nabla g(\tilde{\bi{x}})^{\top} \bi{\mu} 
    + \varSigma \bullet \varLambda 
    + \kappa^{2} z 
    \left|
    \begin{bmatrix}
      \varLambda & \nabla g(\tilde{\bi{x}})/2 \\
      \nabla g(\tilde{\bi{x}})^{\top}/2 & z \\
    \end{bmatrix}
    \succeq 0 
    \right.
    \right\}
    \le 0 . 
  \end{align*}
  This inequality holds if and only if there exists a feasible pair of 
  $z\in \Re$ and $\varLambda \in \SC^{n}$ satisfying 
  \begin{align*}
    g(\tilde{\bi{x}})  + 
    \max_{(\bi{\mu},\varSigma) \in U} 
    \{
    \nabla g(\tilde{\bi{x}})^{\top} \bi{\mu} 
    + \varSigma \bullet \varLambda 
    + \kappa^{2} z 
    \}
    \le 0 , 
  \end{align*}
  which concludes the proof. 
\end{proof}

The conclusion of \refthm{thm:robust.general} is quite abstract in the 
sense that concrete forms of $U_{\bi{\mu}}$ and $U_{\varSigma}$ are not 
specified. 
To use this result into design optimization in practice, we have 
to reduce 
$\max \{ \nabla g(\tilde{\bi{x}})^{\top} \bi{\mu} 
  \mid \bi{\mu} \in U_{\bi{\mu}} \}$ and 
$\max \{ \varSigma \bullet \varLambda  
  \mid \varSigma \in U_{\varSigma} \}$ 
in \eqref{eq:robust.general.1} to tractable forms. 
This is actually performed in section~\ref{sec:robust.ell.infty} and 
section~\ref{sec:robust.ell.2}, where we consider two specific models of 
$U_{\bi{\mu}}$ and $U_{\varSigma}$.

\subsection{Uncertainty model with $\ell_{\infty}$-norm}
\label{sec:robust.ell.infty}

Let $\tilde{\bi{\mu}} \in \Re^{n}$ and 
$\tilde{\varSigma} \in \SC^{n}$ denote the best estimates of 
$\bi{\mu}$ and $\varSigma$, respectively, 
where $\tilde{\varSigma}$ is positive definite. 
In this section, we specialize the results of 
section~\ref{sec:robust.convex} to the case that the uncertainty sets 
are given as 
\begin{align}
  U_{\bi{\mu}} 
  & = \{ \tilde{\bi{\mu}} + A \bi{z}_{1} 
  \mid
  \| \bi{z}_{1} \|_{\infty} \le \alpha , \
  \bi{z}_{1} \in \Re^{m} \} , 
  \label{eq:uncertainty.set.ell.infty.1} \\
  U _{\varSigma} 
  &= \{ \tilde{\varSigma} + B Z_{2} B^{\top} 
  \mid 
  \| Z_{2} \|_{\infty,\infty} \le \beta , \
  Z_{2} \in \SC^{k} 
  \} 
  \cap \SC_{+}^{n} . 
  \label{eq:uncertainty.set.ell.infty.2}
\end{align}
Here, $\bi{z}_{1} \in \Re^{m}$ and $Z_{2} \in \SC^{k}$ are unknown 
vector and matrix reflecting the uncertainty in $\bi{\mu}$ and 
$\varSigma$, respectively, 
$A \in \Re^{n \times m}$ and $B \in \Re^{n \times k}$ are constant 
matrices, 
and $\alpha$ and $\beta$ are nonnegative parameters representing the 
magnitude of uncertainties. 

\begin{example}\label{ex:ell.infty.uncertainty}
  A simple example of the uncertainty set in 
  \eqref{eq:uncertainty.set.ell.infty.1} is a box-constrained model. 
  For example, if we put $\tilde{\bi{\mu}}=\bi{0}$ and $A=I$ with $m=n$, 
  \eqref{eq:uncertainty.set.ell.infty.1} is reduced to 
  \begin{align*}
    U_{\bi{\mu}} 
    = \{ \bi{z}_{1} \in \Re^{n} 
    \mid
    \| \bi{z}_{1} \|_{\infty} \le \alpha  \} . 
  \end{align*}
  This means that the expected value vector $\bi{\mu}$ belongs to a 
  hypercube centered at the origin, with edges parallel to the axes and 
  with an edge length of $2\alpha$. 
  In other words, each component $\mu_{j}$ of $\bi{\mu}$ can take any 
  value in $[-\alpha,\alpha]$. 
  Similarly, a simple example of the uncertainty set in 
  \eqref{eq:uncertainty.set.ell.infty.2} is the one with 
  $B=I$ and $k=n$, i.e., 
  \begin{align*}
    U_{\varSigma} 
    = \{ \tilde{\varSigma} + Z_{2} 
    \mid 
    \| Z_{2} \|_{\infty,\infty} \le \beta , \
    Z_{2}^{\top} = Z_{2} 
    \} 
    \cap \SC_{+}^{n} . 
  \end{align*}
  This means that, roughly speaking, the variance-covariance matrix 
  $\varSigma$ has componentwise uncertainty. 
  More precisely, for each $i$, $j=1,\dots,n$ we have 
  \begin{align}
    \tilde{\varSigma}_{ij} - \beta 
    \le \varSigma_{ij} \le \tilde{\varSigma}_{ij} + \beta , 
    \quad
    \varSigma_{ji} = \varSigma_{ij} , 
    \label{eq:ell1.uncertainty.example.1}
  \end{align}
  and besides $\varSigma$ should be positive semidefinite. 
  It is worth noting that, even if $\tilde{\varSigma}$ and $\beta$ 
  satisfy $\tilde{\varSigma} - \beta\bi{1}\bi{1}^{\top} \succ 0$ and 
  $\tilde{\varSigma} + \beta\bi{1}\bi{1}^{\top} \succ 0$ 
  (here, $\bi{1}$ denotes an all-ones column vector), 
  \eqref{eq:ell1.uncertainty.example.1} does not necessarily imply 
  $\varSigma \succ 0$. 
  Indeed, as for an example with $n=2$, consider 
  \begin{align*}
    \tilde{\varSigma} = 
    \begin{bmatrix}
      3 & 2 \\ 2 & 3 \\
    \end{bmatrix}
    , \quad
    \beta = 2 . 
  \end{align*}
  Then we have 
  \begin{align*}
    \tilde{\varSigma} - \beta\bi{1}\bi{1}^{\top} =
    \begin{bmatrix}
      1 & 0 \\ 0 & 1 \\
    \end{bmatrix}
    \succ 0 
    , \quad
    \tilde{\varSigma} + \beta\bi{1}\bi{1}^{\top} = 
    \begin{bmatrix}
      5 & 4 \\ 4 & 5 \\
    \end{bmatrix}
    \succ 0 , 
  \end{align*}
  and, for example, we see that 
  \begin{align*}
    \varSigma = 
    \begin{bmatrix}
      2 & 3 \\ 3 & 2 \\
    \end{bmatrix}
  \end{align*}
  satisfies \eqref{eq:ell1.uncertainty.example.1} but 
  $\varSigma \not\succ 0$. 
  \finbox
\end{example}

To derive the main result in this section stated in 
\refthm{thm:ell.infty.norm}, we need the two technical lemmas. 
\reflmm{lmm:ell1.norm.mu} explicitly computes the value of 
$\max  \{ \nabla g(\tilde{\bi{x}})^{\top} \bi{\mu} 
  \mid \bi{\mu} \in U_{\bi{\mu}} \}$ in \eqref{eq:robust.general.1}. 
\reflmm{lmm:ell1.norm.varSigma} converts 
$\max \{ \varSigma \bullet \varLambda 
  \mid \varSigma \in U_{\varSigma}\}$ 
to a tractable form. 

\begin{lemma}\label{lmm:ell1.norm.mu}
  For $U_{\bi{\mu}}$ defined by \eqref{eq:uncertainty.set.ell.infty.1}, 
  we have 
  \begin{align*}
    \max_{\bi{\mu} \in U_{\bi{\mu}}} \{ 
    \nabla g(\tilde{\bi{x}})^{\top} \bi{\mu} 
    \} 
    = \nabla g(\tilde{\bi{x}})^{\top} \tilde{\bi{\mu}} 
    + \alpha \| A^{\top} \nabla g(\tilde{\bi{x}}) \|_{1}  . 
  \end{align*}
\end{lemma}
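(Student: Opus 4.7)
The plan is to reduce the maximization to the standard duality between the $\ell_{\infty}$-norm and the $\ell_{1}$-norm. First, I would substitute the parametrization $\bi{\mu} = \tilde{\bi{\mu}} + A \bi{z}_{1}$ from \eqref{eq:uncertainty.set.ell.infty.1} into the objective $\nabla g(\tilde{\bi{x}})^{\top} \bi{\mu}$, which splits into the constant term $\nabla g(\tilde{\bi{x}})^{\top} \tilde{\bi{\mu}}$ and the variable term $(A^{\top} \nabla g(\tilde{\bi{x}}))^{\top} \bi{z}_{1}$. The first term can be pulled out of the maximization, leaving
\[
  \max_{\bi{\mu} \in U_{\bi{\mu}}} \{ \nabla g(\tilde{\bi{x}})^{\top} \bi{\mu} \}
  = \nabla g(\tilde{\bi{x}})^{\top} \tilde{\bi{\mu}}
  + \max_{\| \bi{z}_{1} \|_{\infty} \le \alpha} \{ (A^{\top} \nabla g(\tilde{\bi{x}}))^{\top} \bi{z}_{1} \} .
\]

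Next, I would invoke the elementary duality identity $\max \{ \bi{c}^{\top} \bi{z} \mid \| \bi{z} \|_{\infty} \le \alpha \} = \alpha \| \bi{c} \|_{1}$, which follows from H\"{o}lder's inequality together with the explicit optimizer $z_{j}^{\star} = \alpha \, \sign(c_{j})$ (and any choice of sign when $c_{j}=0$). Setting $\bi{c} = A^{\top} \nabla g(\tilde{\bi{x}})$ yields the claimed formula.

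There is essentially no obstacle here: the parametrization is affine and the feasible set for $\bi{z}_{1}$ is already in the canonical $\ell_{\infty}$-ball form, so the result is just the $\ell_{1}/\ell_{\infty}$ duality. I would keep the proof to two short displays, one for the substitution and one for the sup/norm identity (with a one-line justification via the sign-vector optimizer).
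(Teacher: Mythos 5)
Your proposal is correct and follows essentially the same route as the paper: substitute the affine parametrization, pull out the constant term, and evaluate the remaining maximum over the $\ell_{\infty}$-ball via the $\ell_{1}/\ell_{\infty}$ duality (the paper cites the dual-norm fact directly, while you justify it with H\"{o}lder's inequality and the explicit sign-vector optimizer, which amounts to the same thing).
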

\begin{proof}
  Substitution of \eqref{eq:uncertainty.set.ell.infty.1} into the left 
  side yields 
  \begin{align*}
    \MoveEqLeft
    \max_{\bi{\mu} \in U_{\bi{\mu}}} \{ 
    \nabla g(\tilde{\bi{x}})^{\top} \bi{\mu} 
    \}   \notag\\
    &= \nabla g(\tilde{\bi{x}})^{\top} \tilde{\bi{\mu}} 
    + \max_{\bi{z}_{1}} \{ 
    \nabla g(\tilde{\bi{x}})^{\top} A \bi{z}_{1} 
    \mid
    \| \bi{z}_{1} \|_{\infty} \le \alpha 
    \}   \notag\\
    &= \nabla g(\tilde{\bi{x}})^{\top} \tilde{\bi{\mu}} 
    + \alpha \max_{\bi{z}_{1}} \{ 
    ( A^{\top} \nabla g(\tilde{\bi{x}}) )^{\top} \bi{z}_{1} 
    \mid
    \| \bi{z}_{1} \|_{\infty} \le 1 
    \} . 
  \end{align*}
  It is known that the dual norm of the $\ell_{\infty}$-norm is 
  the $\ell_{1}$-norm \cite[appendix~A.1.6]{BV04}, i.e., 
  \begin{align*}
    \max_{\bi{t} \in \Re^{n}} \{ \bi{s}^{\top} \bi{t} 
    \mid \| \bi{t} \|_{\infty} \le 1 \}  
    = \| \bi{s} \|_{1} . 
  \end{align*}
  Therefore, we obtain 
  \begin{align*}
    \max_{\bi{z}_{1}} \{ 
    ( A^{\top} \nabla g(\tilde{\bi{x}}) )^{\top} \bi{z}_{1} 
    \mid
    \| \bi{z}_{1} \|_{\infty} \le 1
    \}
    = \| A^{\top} \nabla g(\tilde{\bi{x}}) \|_{1} , 
  \end{align*}
  which concludes the proof. 
\end{proof}

\begin{lemma}\label{lmm:ell1.norm.varSigma}
  For $U _{\varSigma}$ defined by \eqref{eq:uncertainty.set.ell.infty.2}, 
  we have 
  \begin{align*}
    \max_{\varSigma \in U_{\varSigma}} \{
    \varLambda \bullet \varSigma  \} 
    = \min_{\varOmega \in \SC_{+}^{k}} \{
    \tilde{\varSigma} \bullet (\varLambda + \varOmega) 
    + \beta \| B^{\top} (\varLambda + \varOmega) B \|_{1,1}  \} . 
  \end{align*}
\end{lemma}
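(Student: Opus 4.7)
The plan is to recognize the left-hand side as a semidefinite program and derive the right-hand side by Lagrangian duality, in the same spirit as the proof of \refthm{thm:SDP.duality}. First I would substitute the parameterization $\varSigma = \tilde{\varSigma} + B Z_{2} B^{\top}$ into $\varLambda \bullet \varSigma$ and apply the cyclic identity to rewrite $\varLambda \bullet (B Z_{2} B^{\top}) = (B^{\top} \varLambda B) \bullet Z_{2}$. The left-hand side then becomes
\begin{align*}
  \tilde{\varSigma} \bullet \varLambda
  + \max_{Z_{2} \in \SC^{k}}
  \{ (B^{\top} \varLambda B) \bullet Z_{2}
  \mid \| Z_{2} \|_{\infty,\infty} \le \beta , \
  \tilde{\varSigma} + B Z_{2} B^{\top} \succeq 0 \} ,
\end{align*}
which is a (symmetric-variable) SDP.

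Next I would dualize the positive semidefiniteness constraint by introducing a multiplier $\varOmega \in \SC_{+}^{n}$ (of the same size as the PSD constraint being dualized), so that the Lagrangian collapses neatly to $(\varLambda + \varOmega) \bullet \tilde{\varSigma} + (B^{\top}(\varLambda + \varOmega) B) \bullet Z_{2}$. The inner maximization over $Z_{2}$ subject only to $\| Z_{2} \|_{\infty,\infty} \le \beta$ is then evaluated using the dual-norm identity that the dual of the elementwise $\ell_{\infty}$-norm is the elementwise $\ell_{1}$-norm, yielding the value $\beta \| B^{\top}(\varLambda + \varOmega) B \|_{1,1}$. A subtle point is that $Z_{2}$ must be symmetric; however, since $B^{\top}(\varLambda + \varOmega) B$ is itself symmetric, the componentwise-optimal choice $(Z_{2})_{ij} = \beta \sign[(B^{\top}(\varLambda + \varOmega)B)_{ij}]$ is automatically symmetric, so the symmetry restriction is not binding and the formula above is correct. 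Combining with the outer minimization over $\varOmega \succeq 0$ produces exactly the expression on the right-hand side of the lemma.

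The main obstacle is to justify the exchange of min and max, i.e., strong duality for the primal SDP. I would verify Slater's condition: because $\tilde{\varSigma}$ is positive definite by assumption, the point $Z_{2} = 0$ is strictly feasible (it lies in the interior of the box $\| Z_{2} \|_{\infty,\infty} \le \beta$ whenever $\beta > 0$, and strictly satisfies the PSD constraint as $\tilde{\varSigma} \succ 0$). Strong duality of SDP then applies, as in \citep[section~11.3]{CEg14}, giving equality of the primal and dual optimal values and hence the claimed identity. The degenerate case $\beta = 0$ can be treated separately and is trivial, since both sides reduce to $\tilde{\varSigma} \bullet \varLambda$ (attained at $\varOmega = 0$ on the right).
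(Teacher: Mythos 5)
Your argument is correct, and it reaches the lemma by a somewhat different dualization than the paper's. The paper dualizes \emph{both} constraints of the inner SDP: it introduces a multiplier pair $(v,V)$ living in the dual of the cone $\{(s,S) \mid \|S\|_{1,1}\le s\}$ for the norm constraint and $\varOmega \succeq 0$ for the semidefinite constraint, obtains a dual with an explicit inequality $\|B^{\top}(\varLambda+\varOmega)B\|_{1,1}\le v$, argues that this inequality is active at optimality, and certifies strong duality by strict feasibility of the \emph{dual}. You instead perform a partial dualization: only the constraint $\tilde{\varSigma}+BZ_{2}B^{\top}\succeq 0$ is relaxed with $\varOmega\succeq 0$, the norm ball is kept explicit, and the inner maximum over $Z_{2}$ is evaluated in closed form via the dual-norm identity, with the (worthwhile) observation that the componentwise sign maximizer is automatically symmetric because $B^{\top}(\varLambda+\varOmega)B$ is. You then certify strong duality by Slater's condition on the \emph{primal}, using $Z_{2}=0$ and $\tilde{\varSigma}\succ 0$; this is a more concrete verification than the paper's unverified assertion of dual strict feasibility, and it also delivers attainment of the outer minimum over $\varOmega$. (For $\beta>0$ your interiority argument is fine; note that since the $\|\cdot\|_{\infty,\infty}$ ball is polyhedral, the refined Slater condition would in fact cover $\beta=0$ as well, though your separate treatment of that case is equally acceptable.) One cosmetic point in your favor: the multiplier of the semidefinite constraint indeed lives in $\SC_{+}^{n}$, not $\SC_{+}^{k}$ as written in the lemma statement; your derivation makes the correct size explicit.
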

\begin{proof}
  We shall show that the right side corresponds to the dual problem of 
  the SDP problem on the left side. 
  Therefore, this proposition follows from the strong duality of 
  SDP \citep[section~11.3]{CEg14}, because the dual problem is strictly 
  feasible. 
  
  As preliminaries, for a convex cone defined by 
  $K = \{ (s,S) \in \Re \times \SC^{k} \mid \| S \|_{1,1} \le s \}$, 
  observe that its dual cone is given by \cite[Example~2.25]{BV04} 
  \begin{align*}
    \MoveEqLeft
    \{ (t,T) \in \Re \times \SC^{k}  
    \mid s t + S \bullet T \ge 0 \ (\forall (s,S) \in K) \}   \notag\\
    &= \{ (t,T) \in \Re \times \SC^{k} \mid \| T \|_{\infty,\infty} \le t \} ,
  \end{align*}
  from which we obtain 
  \begin{align}
    \inf_{s \in \Re, \, S \in \SC^{k}} 
    \{ s t + S \bullet T  \mid \| S \|_{1,1} \le s \} = 
    \begin{dcases*}
      0 
      & if $\| T \|_{\infty,\infty} \le t$, \\
      -\infty 
      & otherwise. 
    \end{dcases*}
    \label{eq:ell1.dual.norm.1}
  \end{align}
  
  By using definition \eqref{eq:uncertainty.set.ell.infty.2} of 
  $U _{\varSigma}$, the left side of the proposition of this theorem is 
  reduced to 
  \begin{align}
    \max_{\varSigma \in U_{\varSigma}} \{
    \varLambda \bullet \varSigma  \} 
    = \max_{Z_{2} \in \SC^{k}} \{
    \varLambda \bullet (\tilde{\varSigma} + B Z_{2} B^{\top}) 
    \mid
    \| Z_{2} \|_{\infty,\infty} \le \beta , \
    \tilde{\varSigma} + B Z_{2} B^{\top} \succeq 0 \} . 
    \label{eq:ell1.dual.norm.2}
  \end{align}
  The Lagrangian of this optimization problem is defined by 
  \begin{align}
    L(Z_{2}; v, V, \varOmega) = 
    \begin{dcases*}
      \varLambda \bullet (\tilde{\varSigma} + B Z_{2} B^{\top}) 
      + (\beta v + Z_{2} \bullet V ) \\
      \qquad{}+ 
      \varOmega \bullet (\tilde{\varSigma} + B Z_{2} B^{\top}) 
      & if $\| V \|_{1,1} \le v$, $\varOmega \succeq 0$, \\
      +\infty 
      & otherwise, 
    \end{dcases*}
    \label{eq:ell1.dual.Lagrangian.1}
  \end{align}
  where $v \in \Re$, $V \in \SC^{k}$, and $\varOmega \in \SC^{n}$ are 
  the Lagrange multipliers. 
  Indeed, by using \eqref{eq:PSD.self-duality} and 
  \eqref{eq:ell1.dual.norm.1}, we can confirm that problem 
  \eqref{eq:ell1.dual.norm.2} is equivalent to 
  \begin{align*}
    \max_{Z_{2}} 
    \inf_{v, \, V, \, \varOmega} L(Z_{2}; v, V, \varOmega) . 
  \end{align*}
  The dual problem is then defined by 
  \begin{align}
    \min_{v, \, V, \, \varOmega} 
    \sup_{Z_{2}} L(Z_{2}; v, V, \varOmega) . 
    \label{eq:ell1.dual.problem}
  \end{align}
  Since \eqref{eq:ell1.dual.Lagrangian.1} can be rewritten as 
  \begin{align*}
    L(Z_{2}; v, V, \varOmega) = 
    \begin{dcases*}
      Z_{2} \bullet (V + B^{\top} (\varLambda + \varOmega) B) \\
      \qquad{}+ \tilde{\varSigma} \bullet (\varLambda + \varOmega) 
      + \beta v  
      & if $\| V \|_{1,1} \le v$, $\varOmega \succeq 0$, \\
      +\infty 
      & otherwise, 
    \end{dcases*}
  \end{align*}
  we obtain 
  \begin{align*}
    \sup_{Z_{2}}
    L(Z_{2}; v, V, \varOmega) = 
    \begin{dcases*}
      \tilde{\varSigma} \bullet (\varLambda + \varOmega) 
      + \beta v  
      & if $\| V \|_{1,1} \le v$, $\varOmega \succeq 0$, 
      $V = - B^{\top} (\varLambda + \varOmega) B$, \\
      +\infty 
      & otherwise. 
    \end{dcases*}
  \end{align*}
  Therefore, the dual problem in \eqref{eq:ell1.dual.problem} is 
  explicitly written as follows: 
  \begin{alignat*}{3}
    & \MIN_{v \in V, \, \varLambda \in \SC^{k}, \, \varOmega \in \SC^{k}}
    &{\quad}& 
    \tilde{\varSigma} \bullet (\varLambda + \varOmega) 
    + \beta v \\
    & \ST && 
    \| B^{\top} (\varLambda + \varOmega) B \|_{1,1} \le v , \\
      & &&
    \varOmega \succeq 0 .
  \end{alignat*}
  Constraint $\| B^{\top} (\varLambda + \varOmega) B \|_{1,1} \le v$ 
  becomes active at an optimal solution, which concludes the proof. 
\end{proof}

We are now in position to state the main result of this section. 
By using \refthm{thm:robust.general}, \reflmm{lmm:ell1.norm.mu}, and 
\reflmm{lmm:ell1.norm.varSigma}, we obtain the following fact. 

\begin{theorem}\label{thm:ell.infty.norm}
  Let $U_{\bi{\mu}}$ and $U _{\varSigma}$ be the sets defined by 
  \eqref{eq:uncertainty.set.ell.infty.1} and 
  \eqref{eq:uncertainty.set.ell.infty.2}, respectively. 
  Then, $\tilde{\bi{x}} \in X$ satisfies \eqref{eq:robust.reliable} 
  if and only if there exists a pair of 
  $z \in \Re$ and $W \in \SC^{n}$ satisfying 
  \begin{align}
    & g(\tilde{\bi{x}}) 
    + \nabla g(\tilde{\bi{x}})^{\top} \tilde{\bi{\mu}} 
    + \alpha \| A^{\top} \nabla g(\tilde{\bi{x}}) \|_{1} 
    + \tilde{\varSigma} \bullet W  
    + \beta \| B^{\top} W B \|_{1,1} 
    + \kappa^{2} z \le 0 , 
    \label{eq:ell.infty.norm.1} \\
    & 
    \begin{bmatrix}
      W & \nabla g(\tilde{\bi{x}})/2 \\
      \nabla g(\tilde{\bi{x}})^{\top}/2 & z \\
    \end{bmatrix}
    \succeq 0 . 
    \label{eq:ell.infty.norm.2}
  \end{align}
\end{theorem}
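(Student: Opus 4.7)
The plan is to chain together Theorem~\ref{thm:robust.general}, Lemma~\ref{lmm:ell1.norm.mu}, and Lemma~\ref{lmm:ell1.norm.varSigma}, and then eliminate an auxiliary variable by a monotonicity argument. First I would invoke Theorem~\ref{thm:robust.general}: the robust reliability constraint \eqref{eq:robust.reliable} is equivalent to the existence of $z \in \Re$ and $\varLambda \in \SC^{n}$ satisfying \eqref{eq:robust.general.1}--\eqref{eq:robust.general.2}. Into \eqref{eq:robust.general.1} I would then substitute the closed-form evaluation of $\max\{ \nabla g(\tilde{\bi{x}})^{\top}\bi{\mu} \mid \bi{\mu} \in U_{\bi{\mu}} \}$ from Lemma~\ref{lmm:ell1.norm.mu}, producing the terms $\nabla g(\tilde{\bi{x}})^{\top}\tilde{\bi{\mu}} + \alpha \|A^{\top}\nabla g(\tilde{\bi{x}})\|_{1}$.

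Next, to handle the variance-covariance term, I would apply Lemma~\ref{lmm:ell1.norm.varSigma} to replace $\max\{\varSigma \bullet \varLambda \mid \varSigma \in U_{\varSigma}\}$ by the minimum over $\varOmega \in \SC_{+}^{k}$ of $\tilde{\varSigma}\bullet(\varLambda+\varOmega) + \beta\|B^{\top}(\varLambda+\varOmega)B\|_{1,1}$. Since the outer quantifier is an existential, an inequality of the form $(\text{constant}) + \min_{\varOmega \succeq 0}\{\cdots\} + \kappa^{2}z \le 0$ holds for some $(\varLambda,z)$ iff there exists $(\varLambda,z,\varOmega)$ with $\varOmega \succeq 0$ satisfying the same inequality with the minimum removed. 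So the condition becomes: there exist $z$, $\varLambda \in \SC^{n}$, and $\varOmega \in \SC^{n}_{+}$ with
\begin{align*}
 g(\tilde{\bi{x}}) + \nabla g(\tilde{\bi{x}})^{\top}\tilde{\bi{\mu}} + \alpha\|A^{\top}\nabla g(\tilde{\bi{x}})\|_{1} + \tilde{\varSigma}\bullet(\varLambda+\varOmega) + \beta\|B^{\top}(\varLambda+\varOmega)B\|_{1,1} + \kappa^{2}z \le 0
\end{align*}
together with the LMI \eqref{eq:robust.general.2} on $\varLambda$.

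The final step, and the main subtlety, is to eliminate $\varLambda$ and $\varOmega$ in favor of the single variable $W := \varLambda + \varOmega$. The key observation I would use is a monotonicity property of the LMI block: if $W \succeq \varLambda$, then
\begin{align*}
 \begin{bmatrix} W & \nabla g(\tilde{\bi{x}})/2 \\ \nabla g(\tilde{\bi{x}})^{\top}/2 & z \end{bmatrix}
 = \begin{bmatrix} \varLambda & \nabla g(\tilde{\bi{x}})/2 \\ \nabla g(\tilde{\bi{x}})^{\top}/2 & z \end{bmatrix} + \begin{bmatrix} W - \varLambda & 0 \\ 0 & 0 \end{bmatrix} \succeq 0,
\end{align*}
so the block LMI for $\varLambda$ together with $W - \varLambda = \varOmega \succeq 0$ implies the block LMI for $W$. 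Conversely, given a feasible $(z,W)$ for \eqref{eq:ell.infty.norm.1}--\eqref{eq:ell.infty.norm.2}, one can set $\varLambda := W$ and $\varOmega := 0$, which satisfies the earlier system (the norm expression $\beta\|B^{\top}(\varLambda+\varOmega)B\|_{1,1} = \beta\|B^{\top}WB\|_{1,1}$ and the other terms are identical, and the LMI on $\varLambda = W$ is exactly \eqref{eq:ell.infty.norm.2}).

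The hard part is making this last reduction airtight: I must verify that the direction eliminating $\varOmega$ does not silently strengthen the constraint. The point is that enlarging $\varLambda$ to $W$ leaves the scalar inequality unchanged (because only $\varLambda + \varOmega = W$ appears in it) and only relaxes the PSD block. With this monotonicity in hand, the two formulations are equivalent, yielding \eqref{eq:ell.infty.norm.1}--\eqref{eq:ell.infty.norm.2}.
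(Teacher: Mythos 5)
Your proposal is correct and follows essentially the same route as the paper: chain Theorem~\ref{thm:robust.general} with Lemma~\ref{lmm:ell1.norm.mu} and Lemma~\ref{lmm:ell1.norm.varSigma}, then set $W = \varLambda + \varOmega$. The paper dismisses the final elimination of $(\varLambda,\varOmega)$ as ``straightforwardly equivalent,'' whereas you spell out the two directions (adding the PSD block $\operatorname{diag}(W-\varLambda,0)$ one way, taking $\varLambda = W$, $\varOmega = 0$ the other), which is exactly the justification the paper leaves implicit.
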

\begin{proof}
  It follows from \reflmm{lmm:ell1.norm.mu} and 
  \reflmm{lmm:ell1.norm.varSigma} that 
  \eqref{eq:robust.general.1} and \eqref{eq:robust.general.2} in 
  \refthm{thm:robust.general} are equivalently rewritten as 
  \begin{align*}
    & g(\tilde{\bi{x}}) 
    + \nabla g(\tilde{\bi{x}})^{\top} \tilde{\bi{\mu}} 
    + \alpha \| A^{\top} \nabla g(\tilde{\bi{x}}) \|_{1} 
    + \tilde{\varSigma} \bullet (\varLambda + \varOmega) 
    + \beta \| B^{\top} (\varLambda + \varOmega) B \|_{1,1} 
    + \kappa^{2} z \le 0 , \\
    & \varOmega \succeq 0 , \\
    & 
    \begin{bmatrix}
      \varLambda & \nabla g(\tilde{\bi{x}})/2 \\
      \nabla g(\tilde{\bi{x}})^{\top}/2 & z \\
    \end{bmatrix}
    \succeq 0 . 
  \end{align*}
  Put $W = \varLambda + \varOmega$ to see that this is reduced to 
  \begin{align*}
    & g(\tilde{\bi{x}}) 
    + \nabla g(\tilde{\bi{x}})^{\top} \tilde{\bi{\mu}} 
    + \alpha \| A^{\top} \nabla g(\tilde{\bi{x}}) \|_{1} 
    + \tilde{\varSigma} \bullet W  
    + \beta \| B^{\top} W B \|_{1,1} 
    + \kappa^{2} z \le 0 , \\
    & W - \varLambda \succeq 0 , \\
    & 
    \begin{bmatrix}
      \varLambda & \nabla g(\tilde{\bi{x}})/2 \\
      \nabla g(\tilde{\bi{x}})^{\top}/2 & z \\
    \end{bmatrix}
    \succeq 0 . 
  \end{align*}
  This is straightforwardly equivalent to \eqref{eq:ell.infty.norm.1} 
  and \eqref{eq:ell.infty.norm.2}. 
\end{proof}

It should be emphasized that 
\refthm{thm:ell.infty.norm} converts the set of infinitely many 
reliability constraints in \eqref{eq:robust.reliable.infinite} 
to two deterministic constraints, i.e., 
\eqref{eq:ell.infty.norm.1} and \eqref{eq:ell.infty.norm.2}. 
The latter constraints can be handled within the framework of 
conventional (deterministic) optimization.

\subsection{Uncertainty model with $\ell_{2}$-norm}
\label{sec:robust.ell.2}

In this section, we consider the uncertainty sets defined by 
\begin{align*}
  U_{\bi{\mu}} 
  &=  \{ \tilde{\bi{\mu}} + A \bi{z}_{1} 
  \mid
  \| \bi{z}_{1} \|_{2} \le \alpha , \
  \bi{z}_{1} \in \Re^{m}  \} , \\
  U_{\varSigma} 
  &= \{ \tilde{\varSigma} + B Z_{2} B^{\top} 
  \mid 
  \| Z_{2} \|_{\rr{F}} \le \beta , \
  Z_{2} \in \SC^{k} 
  \} \cap \SC_{+}^{n} . 
\end{align*}

\begin{example}
  As a simple example, put $\tilde{\bi{\mu}}=\bi{0}$ and $A=I$ with 
  $m=n$ to obtain 
  \begin{align*}
    U_{\bi{\mu}} 
    =  \{ \bi{z}_{1} \in \Re^{n} 
    \mid
    \| \bi{z}_{1} \|_{2} \le \alpha  \} . 
  \end{align*}
  This means that the expected value vector $\bi{\mu}$ belongs to a 
  hypersphere centered at the origin with radius $\alpha$. 
  Similarly, putting $B=I$ and $k=n$ we obtain 
  \begin{align*}
    U_{\varSigma} 
    = \{ \tilde{\varSigma} + Z_{2} 
    \mid 
    \| Z_{2} \|_{\rr{F}} \le \beta , \
    Z_{2}^{\top} = Z_{2} 
    \} \cap \SC_{+}^{n} . 
  \end{align*}
  This means that the variance-covariance matrix $\varSigma$ satisfies 
  \begin{align*}
    \sum_{i=1}^{n} \sum_{j=1}^{n} 
    (\varSigma_{ij} - \tilde{\varSigma}_{ij})^{2}  \le \beta^{2} 
  \end{align*}
  and is symmetric positive semidefinite. 
  \finbox
\end{example}

In a manner parallel to the proofs of 
\reflmm{lmm:ell1.norm.mu} and \reflmm{lmm:ell1.norm.varSigma}, we can 
obtain 
\begin{align*}
  \max_{\bi{\mu} \in U_{\bi{\mu}}} \{ 
  \nabla g(\tilde{\bi{x}})^{\top} \bi{\mu} 
  \} 
  &= \nabla g(\tilde{\bi{x}})^{\top} \tilde{\bi{\mu}} 
  + \alpha \| A^{\top} \nabla g(\tilde{\bi{x}}) \|_{2} , \\
  \max_{\varSigma \in U_{\varSigma}} \{
  \varLambda \bullet \varSigma  \} 
  &= \min_{\varOmega \in \SC^{k}} \{
  \tilde{\varSigma} \bullet (\varLambda + \varOmega) 
  + \beta \| B^{\top} (\varLambda + \varOmega) B \|_{\rr{F}} 
  \mid
  \varOmega \succeq 0 \} . 
\end{align*}
Here, the facts 
\begin{align*}
  \max_{\bi{t} \in \Re^{n}} \{ \bi{s}^{\top} \bi{t} 
  \mid \| \bi{t} \|_{2} \le 1 \}  
  = \| \bi{s} \|_{2} 
\end{align*}
and 
\begin{align*}
  \MoveEqLeft
  \{ (t,T) \in \Re \times \SC^{k}  
  \mid s t + S \bullet T \ge 0 \ 
  (\forall (s,S) : \| S \|_{\rr{F}} \le s) \}   \notag\\
  &= \{ (t,T) \in \Re \times \SC^{k} \mid \| T \|_{\rr{F}} \le t \} 
\end{align*}
have been used. 

Accordingly, analogous to \refthm{thm:ell.infty.norm}, we obtain the 
following conclusion: $\tilde{\bi{x}} \in X$ satisfies 
\eqref{eq:robust.reliable} if and only if there exists a pair of 
$z \in \Re$ and $W \in \SC^{n}$ satisfying 
\begin{align*}
  & g(\tilde{\bi{x}}) 
  + \nabla g(\tilde{\bi{x}})^{\top} \tilde{\bi{\mu}} 
  + \alpha \| A^{\top} \nabla g(\tilde{\bi{x}}) \|_{2} 
  + \tilde{\varSigma} \bullet W 
  + \beta \| B^{\top} W B \|_{\rr{F}} 
  + \kappa^{2} z \le 0 ,  \\
  & 
  \begin{bmatrix}
    W & \nabla g(\tilde{\bi{x}})/2 \\
    \nabla g(\tilde{\bi{x}})^{\top}/2 & z \\
  \end{bmatrix}
  \succeq 0 . 
\end{align*}

\subsection{Truss optimization under compliance constraint}
\label{sec:robust.compliance}

In this section, we present 
how the results established in the preceding sections 
can be employed for a specific RBDO problem. 
As a simple example, we consider a reliability constraint on the 
compliance under a static external load. 
We assume linear elasticity and small deformation. 

For ease of comprehension, consider design optimization of a truss. 
In this context, $x_{j}$ denotes the cross-sectional area of truss 
member $j$ $(j=1,\dots,n)$, where $n$ is the number of members. 
We attempt to minimize the structural volume of the truss, 
$\bi{c}^{\top}\bi{x}$, under the compliance constraint, 
where $c_{j}$ denotes the undeformed member length. 
Let $\pi(\bi{x})$ denote the compliance corresponding to a static 
external load. 
The first-order approximation of the compliance constraint is written as 
\begin{align*}
  \pi(\tilde{\bi{x}}) + \nabla\pi(\tilde{\bi{x}})^{\top} \bi{\zeta}
  \le \bar{\pi} , 
\end{align*}
where $\bar{\pi}$ $(>0)$ is a specified upper bound for the compliance. 
Accordingly, the design optimization problem to be solved is formulated 
as follows: 
\begin{subequations}\label{P:truss.compliance.0}%
  \begin{alignat}{3}
    & \MIN_{\tilde{\bi{x}}}
    &{\quad}& 
    \bi{c}^{\top} \tilde{\bi{x}} \\
    & \ST && 
    \tilde{\bi{x}} \ge \bar{\bi{x}} , \\
    & &&
    \rr{P}_{\NC(\bi{\mu},\varSigma)} 
    \{ \pi(\tilde{\bi{x}}) + \nabla \pi(\tilde{\bi{x}})^{\top} \bi{\zeta} 
    \le \bar{\pi} \} 
    \ge 1 - \epsilon  , 
    \quad \forall (\bi{\mu},\varSigma) \in U . 
  \end{alignat}
\end{subequations}
Here, the specified lower bound for the member cross-sectional area, 
denoted by $\bar{x}_{j}$ $(j=1,\dots,n)$, is positive, because in this 
paper we restrict ourselves to optimization problems without variation 
of structural topology. 

As for uncertainty sets of the moments, consider, for example, 
$U_{\bi{\mu}}$ and $U_{\varSigma}$ studied in 
section~\ref{sec:robust.ell.2}. 
For simplicity put $A=B=I$ so as to obtain 
\begin{align*}
  U_{\bi{\mu}} 
  &=  \{ \tilde{\bi{\mu}} + \bi{z}_{1} 
  \mid
  \| \bi{z}_{1} \|_{2} \le \alpha , \
  \bi{z}_{1} \in \Re^{n}  \} , \\
  U_{\varSigma} 
  &= \{ \tilde{\varSigma} + Z_{2} 
  \mid 
  \| Z_{2} \|_{\rr{F}} \le \beta , \
  Z_{2} \in \SC^{n} 
  \} \cap \SC_{+}^{n} . 
\end{align*}
From the result in section~\ref{sec:robust.ell.2}, we see that problem 
\eqref{P:truss.compliance.0} is equivalently rewritten as follows: 
\begin{subequations}\label{P:truss.compliance.1}%
  \begin{alignat}{3}
    & \MIN_{\tilde{\bi{x}}, \, z, \, W}
    &{\quad}& 
    \bi{c}^{\top} \tilde{\bi{x}} \\
    & \ST && 
    \tilde{\bi{x}} \ge \bar{\bi{x}} , \\
    & &&
    \pi(\tilde{\bi{x}}) 
    + \nabla \pi(\tilde{\bi{x}})^{\top} \tilde{\bi{\mu}} 
    + \alpha \| \nabla \pi(\tilde{\bi{x}}) \|_{2} 
    + \tilde{\varSigma} \bullet W 
    + \beta \| W \|_{\rr{F}} 
    + \kappa^{2} z \le \bar{\pi} ,  
    \label{P:truss.compliance.1.3} \\
    & &&
    \begin{bmatrix}
      W & \nabla \pi(\tilde{\bi{x}})/2 \\
      \nabla \pi(\tilde{\bi{x}})^{\top}/2 & z \\
    \end{bmatrix}
    \succeq 0 . 
    \label{P:truss.compliance.1.4}
  \end{alignat}
\end{subequations}
Here, $\tilde{\bi{x}} \in \Re^{n}$, $z \in \Re$, and $W \in \SC^{n}$ are 
variables to be optimized. 
It is worth noting that problem \eqref{P:truss.compliance.1} is a 
nonlinear SDP problem. 

The remainder of this section is devoted to presenting a method for 
solving problem \eqref{P:truss.compliance.1} that will be used for the 
numerical experiments in section~\ref{sec:ex}. 

The method sequentially solves SDP problems that approximate 
problem \eqref{P:truss.compliance.1}, in a fashion similar to sequential 
SDP methods for nonlinear SDP problems \citep{YY15,KT06}. 
Let $\tilde{\bi{x}}^{k}$ denote the incumbent solution obtained at 
iteration $k-1$. 
Define $\bi{h}_{k} \in \Re^{n}$ by 
\begin{align*}
  \bi{h}_{k} = \nabla \pi(\tilde{\bi{x}}^{k}) . 
\end{align*}
At iteration $k$, we replace $\nabla \pi(\tilde{\bi{x}})$ in 
\eqref{P:truss.compliance.1.3} and \eqref{P:truss.compliance.1.4} with 
$\bi{h}_{k}$. 
Moreover, to deal with $\pi(\tilde{\bi{x}})$ in 
\eqref{P:truss.compliance.1.3}, we use the fact that $s \in \Re$ 
satisfies 
\begin{align*}
  \pi(\bi{x}) \le s
\end{align*}
if and only if 
\begin{align}
  \begin{bmatrix}
    K(\tilde{\bi{x}}) & \bi{p} \\
    \bi{p}^{\top} & s \\
  \end{bmatrix}
  \succeq 0 
  \label{eq:compliance.matrix.inequality}
\end{align}
is satisfied \citep[section~3.1]{Kan11}, where 
$K(\tilde{\bi{x}}) \in \SC^{d}$ is the stiffness matrix of the truss, 
$\bi{p} \in \Re^{d}$ is the external load vector, and 
$d$ is the number of degrees of freedom of the nodal displacements. 
It is worth noting that, for trusses, $K(\tilde{\bi{x}})$ is linear in 
$\tilde{\bi{x}}$. 
Therefore, \eqref{eq:compliance.matrix.inequality} is a linear matrix 
inequality with respect to $\tilde{\bi{x}}$ and $s$, and hence can be 
handled within the framework of (linear) SDP. 
By this means, we obtain the following subproblem that is solved at 
iteration $k$ for updating $\tilde{\bi{x}}^{k}$ to $\tilde{\bi{x}}^{k+1}$: 
\begin{subequations}\label{P.sequential.SDP.sub.problem}%
  \begin{alignat}{3}
    & \MIN_{\tilde{\bi{x}}, \, z, \, w, \, s}
    &{\quad}& 
    \bi{c}^{\top} \tilde{\bi{x}} \\
    & \st && 
    \tilde{\bi{x}} \ge \bar{\bi{x}} , \\
    & && 
    \begin{bmatrix}
      k(\tilde{\bi{x}}) & \bi{p} \\
      \bi{p}^{\top} & s \\
    \end{bmatrix}
    \succeq 0 , \\
    & &&
    s + \bi{h}_{k}^{\top} \tilde{\bi{\mu}} 
    + \alpha \| \bi{h}_{k} \|_{2} 
    + \tilde{\varSigma} \bullet w 
    + \beta \| w \|_{\rr{F}} 
    + \kappa^{2} z \le \bar{\pi} , \\
    & &&
    \begin{bmatrix}
      w & \bi{h}_{k}/2 \\
      \bi{h}_{k}^{\top}/2 & z \\
    \end{bmatrix}
    \succeq 0 . 
  \end{alignat}
\end{subequations}
Since this is a linear SDP problem, we can solve this problem 
efficiently with a primal-dual interior-point method \citep{AL12}.

\section{Extensions}
\label{sec:general}

This section discusses some extensions of the results obtained in 
section~\ref{sec:robust}. 

\subsection{Robustness against uncertainty in distribution type}
\label{sec:general.non-Gaussian}

An important extension is that the obtained results can be applied to 
the case that, not only the moments, but also the type of probability 
distribution are unknown. 
In this case, we consider any combination of all types of probability 
distributions and all possible moments (expected value vectors and 
variance-covariance matrices) in the uncertainty set, and require that 
the failure probability is no greater than a specified value. 
This robustness against uncertainty in distribution type is important as 
the input distribution in practice is not necessarily known to be a 
normal distribution. 

Recall that, in section~\ref{sec:fundamental} and 
section~\ref{sec:robust}, we assumed that the design variables, $\bi{x}$, 
follows a normal distribution. 
Then we consider the robust reliability constraint in 
\eqref{eq:robust.reliable}. 
For the sake of clarity, we restate this problem setting in a slightly 
different manner. 
We have assumed that random vector $\bi{\zeta}$ can possibly follow any 
normal distribution satisfying 
$\bi{\mu} \in U_{\bi{\mu}}$ and $\varSigma \in U_{\varSigma}$. 
We use $\PC_{\NC}$ to denote the set of such normal distributions, i.e., 
\begin{align}
  \PC_{\NC} = 
  \{  \NC(\bi{\mu},\varSigma) 
  \mid
  \bi{\mu} \in U_{\bi{\mu}} , \
  \varSigma \in U_{\varSigma}  \} . 
  \label{eq:set.of.all.normal}
\end{align}
In other words, $\PC_{\NC}$ is the set of all possible realizations of 
the input distribution. 
We write $p \in \PC_{\NC}$ if $p$ is one of such realizations. 
With this new notation, \eqref{eq:robust.reliable} can be rewritten 
equivalently as 
\begin{align}
  \sup_{p \in \PC_{\NC}} \bigl\{
  \rr{P}_{p} 
  \{ g(\tilde{\bi{x}}) + \nabla g(\tilde{\bi{x}})^{\top} \bi{\zeta}  \le 0 \} 
  \bigr\}  \ge 1 - \epsilon .  
  \label{eq:nominal.robust.reliable}
\end{align}
For $U_{\bi{\mu}}$ and $U_{\varSigma}$ defined in  
section~\ref{sec:robust.ell.infty}, \refthm{thm:ell.infty.norm} 
shows that \eqref{eq:nominal.robust.reliable} is equivalent to 
\eqref{eq:ell.infty.norm.1} and \eqref{eq:ell.infty.norm.2}. 

We are now in position to consider {\em any\/} type of probability 
distribution. 
Only what we assume is that the input distribution satisfies 
$\bi{\mu} \in U_{\bi{\mu}}$ and $\varSigma \in U_{\varSigma}$, where, 
for a while, we consider $U_{\bi{\mu}}$ and $U_{\varSigma}$ defined in 
section~\ref{sec:robust.ell.infty}. 
We use $\PC$ to denote the set of such distributions, i.e., 
\begin{align}
  \PC = 
  \{ p \mid
  p \text{ is a distribution with 
  $\bi{\mu} \in U_{\bi{\mu}}$ \& 
  $\varSigma \in U_{\varSigma}$} 
  \} . 
  \label{eq:set.of.all.distributions}
\end{align}
Then, instead of \eqref{eq:nominal.robust.reliable}, we consider the 
following constraint: 
\begin{align}
  \sup_{p \in \PC} \bigl\{
  \rr{P}_{p} 
  \{ g(\tilde{\bi{x}}) + \nabla g(\tilde{\bi{x}})^{\top} \bi{\zeta}  \le 0 \} 
  \bigr\}  \ge 1 - \epsilon .  
  \label{eq:all.robust.reliable}
\end{align}
That is, we require that the reliability constraint should be satisfied 
for any input distribution $p$ satisfying $p \in \PC$. 
A main assertion of this section is that, by simply setting 
\begin{align}
  \kappa = \sqrt{\frac{1-\epsilon}{\epsilon}}  
  \label{eq:kappa.all.distributions}
\end{align}
instead of $\kappa=-\varPhi^{-1}(\epsilon)$, constraint 
\eqref{eq:all.robust.reliable} is equivalent to 
\eqref{eq:ell.infty.norm.1} and \eqref{eq:ell.infty.norm.2} in 
\refthm{thm:ell.infty.norm}. 
We can show this fact in the following manner. 
Let $\PC(\bi{\mu},\varSigma)$ denote the set of probability 
distributions, the expected value vector and the variance-covariance 
matrix of which are $\bi{\mu}$ and $\varSigma$, respectively. 
Observe that, with $\PC(\bi{\mu},\varSigma)$, 
\eqref{eq:all.robust.reliable} can be rewritten equivalently as 
\begin{align}
  \sup_{(\bi{\mu},\varSigma) \in U} \left\{
  \sup_{p \in \PC(\bi{\mu},\varSigma)} \bigl\{
  \rr{P}_{p} 
  \{ g(\tilde{\bi{x}}) + \nabla g(\tilde{\bi{x}})^{\top} \bi{\zeta}  \le 0 \} 
  \bigr\} \right\} \ge 1 - \epsilon . 
  \label{eq:all.robust.reliable.2}
\end{align}
With relation to the inner supremum, consider the condition 
\begin{align}
  \sup_{p \in \PC(\bi{\mu},\varSigma)} \bigl\{
  \rr{P}_{p} 
  \{ g(\tilde{\bi{x}}) + \nabla g(\tilde{\bi{x}})^{\top} \bi{\zeta}  \le 0 \} 
  \bigr\}  \ge 1 - \epsilon . 
  \label{eq:all.robust.reliable.3}
\end{align}
\citet[Theorem~1]{EOO03} show that \eqref{eq:all.robust.reliable.3} 
holds if and only if \eqref{eq:reliability.constraint.3} of 
\refthm{thm:reliability.constraint} holds with $\kappa$ defined by 
\eqref{eq:kappa.all.distributions}. 
Therefore, all the subsequent results established in 
section~\ref{sec:fundamental} and section~\ref{sec:robust} hold by 
simply replacing the value of $\kappa$ with the one in 
\eqref{eq:kappa.all.distributions}. 
Thus, the robust reliability constraint with unknown distribution type 
is also reduced to the form in \eqref{eq:ell.infty.norm.1} and 
\eqref{eq:ell.infty.norm.2} of \refthm{thm:ell.infty.norm}. 

The result in section~\ref{sec:robust.ell.2}, which are established for 
the $\ell_{2}$-norm uncertainty model, is also extended to the case of 
unknown distribution type by replacing $\kappa$ with the value in 
\eqref{eq:kappa.all.distributions}.

\subsection{Multiple constraints}
\label{sec:general.multiple.constraints}

In section~\ref{sec:fundamental} and section~\ref{sec:robust}, 
we have restricted ourselves to the case that the design optimization 
problem has a single performance requirement, 
\eqref{eq:performance.constraint}. 
In this section, we discuss treatment of multiple constraints. 

Suppose that the performance requirement is written as 
\begin{align*}
  g_{i}(\bi{x}) \le 0 , 
  \quad
  i=1,\dots,m . 
\end{align*}
The first-order approximation yields 
\begin{align*}
  g_{i}^{\rr{lin}}(\bi{x}) \le 0 , 
  \quad
  i=1,\dots,m , 
\end{align*}
where 
$g_{i}^{\rr{lin}}(\bi{x}) 
  = g_{i}(\tilde{\bi{x}}) 
  + \nabla g_{i}(\tilde{\bi{x}})^{\top}\bi{\zeta}$ $(i=1,\dots,m)$. 
Suppose that we impose a distributionally-robust reliability constraint 
for each $i=1,\dots,m$ independently, i.e., 
\begin{align}
  \sup_{p \in \PC} \bigl\{
  \rr{P}_{p} 
  \{ g_{i}^{\rr{lin}}(\bi{x}) \le 0 \} 
  \bigr\}  \ge 1 - \epsilon ,  
  \quad
  i=1,\dots,m . 
  \label{eq:multiple.constraint.1}
\end{align}
Here, $\PC$ is the set of possible realizations of the input 
distribution (i.e., $\PC$ here is either $\PC_{\NC}$ in 
\eqref{eq:set.of.all.normal} or $\PC$ in 
\eqref{eq:set.of.all.distributions}). 
It is worth noting that in \eqref{eq:multiple.constraint.1} the worst 
case distributions are considered independently for each $i=1,\dots,m$. 
Constraint \eqref{eq:multiple.constraint.1} can be 
straightforwardly dealt with in the same manner as 
section~\ref{sec:robust}. 

In contrast, suppose that we consider a single (i.e., common) worst-case 
distribution for all $i=1,\dots,m$. 
Then the distributionally-robust reliability constraint is written as 
\begin{align}
  \sup_{p \in \PC} \bigl\{
  \rr{P}_{p} 
  \{ g_{i}^{\rr{lin}}(\bi{x}) \le 0 \
  (i=1,\dots,m) \} 
  \bigr\}  \ge 1 - \epsilon . 
  \label{eq:multiple.constraint.2}
\end{align}
Treatment of this constraint remains to be studied as future work. 
It is worth noting that constraint \eqref{eq:multiple.constraint.1} is 
conservative compared with constraint \eqref{eq:multiple.constraint.2}.

\section{Numerical examples}
\label{sec:ex}

In section~\ref{sec:robust.compliance} we have seen that an optimization 
problem of trusses under the compliance constraint is reduced to 
problem \eqref{P:truss.compliance.1}. 
In this section we solve this optimization problem numerically. 

The algorithm presented in section~\ref{sec:robust.compliance} 
was implemented in Matlab ver.~9.8.0.\footnote{%
Source codes for solving the optimization problems presented in 
section~\ref{sec:ex} are available on-line at 
\url{https://github.com/ykanno22/moment_worst/}. } 
The SDP problem in \eqref{P.sequential.SDP.sub.problem} was solved by 
CVX ver.~2.2 \citep{GB08,CVX} with 
SeDuMi ver.~1.3.4 \citep{Stu99,Pol05}. 
Computation was carried out on 
a 2.6{\,}GHz Intel Core i7-9750H processor with 32{\,}GB RAM. 

\begin{figure}[tbp]
  \centering
  \includegraphics[scale=1.10]{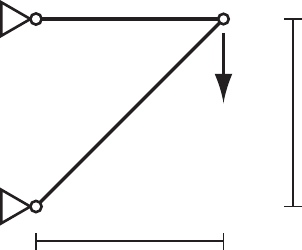}
  \begin{picture}(0,0)
    \put(-172,-38){
    \put(100,102){{\small $1$}}
    \put(100,76){{\small $2$}}
    \put(168,78){{\small $1\,\mathrm{m}$}}
    \put(106,31){{\small $1\,\mathrm{m}$}}
    }
  \end{picture}
  \bigskip
  \caption{Problem setting of example (I): 2-bar truss. }
  \label{fig:gs2bar}
\end{figure}

\begin{table*}[tbp]
  \centering
  \caption{Optimal solutions of example (I) 
  with $\epsilon=0.01$, $\alpha=0.2$, and $\beta=0.01$ 
  (probability distributions are assumed to be normal distributions). }
  \label{tab:solution.ex.I}
  \begin{tabular}{lrrrr}
    \toprule
    & $x_{1}$ ($\mathrm{mm}^2$) & $x_{2}$ ($\mathrm{mm}^2$) 
    & Obj.\ val.\ ($\mathrm{mm^{3}}$) & $\pi(\tilde{\bi{x}})$ (J) \\
    \midrule
    Nominal optim.\ 
    & 1500.0 & 2121.3 & $4.5000 \times 10^{6}$ & 100.000 \\
    $\ell_{\infty}$-norm unc.\
    & 1558.0 & 2203.4 & $4.6741 \times 10^{6}$ & 96.274 \\
    $\ell_{2}$-norm unc.\
    & 1535.4 & 2171.4 & $4.6063 \times 10^{6}$ & 97.692 \\
    \bottomrule
  \end{tabular}
\end{table*}

\begin{figure*}[tbp]
  \centering
  \subfloat[]{
  \label{fig:2bar_ell1_x1}
  \includegraphics[scale=0.40]{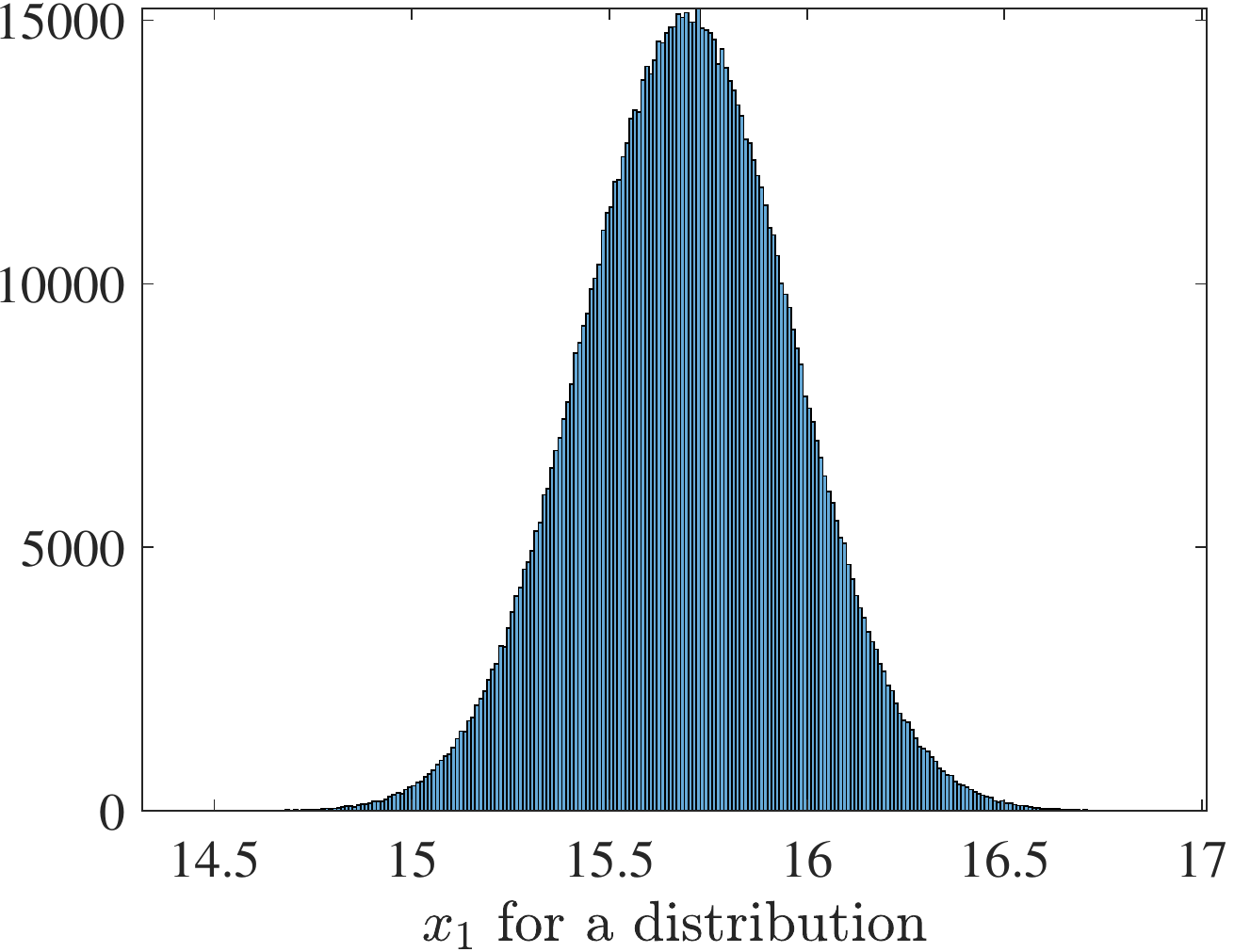}
  }
  \hfill
  \subfloat[]{
  \label{fig:2bar_ell1_x2}
  \includegraphics[scale=0.40]{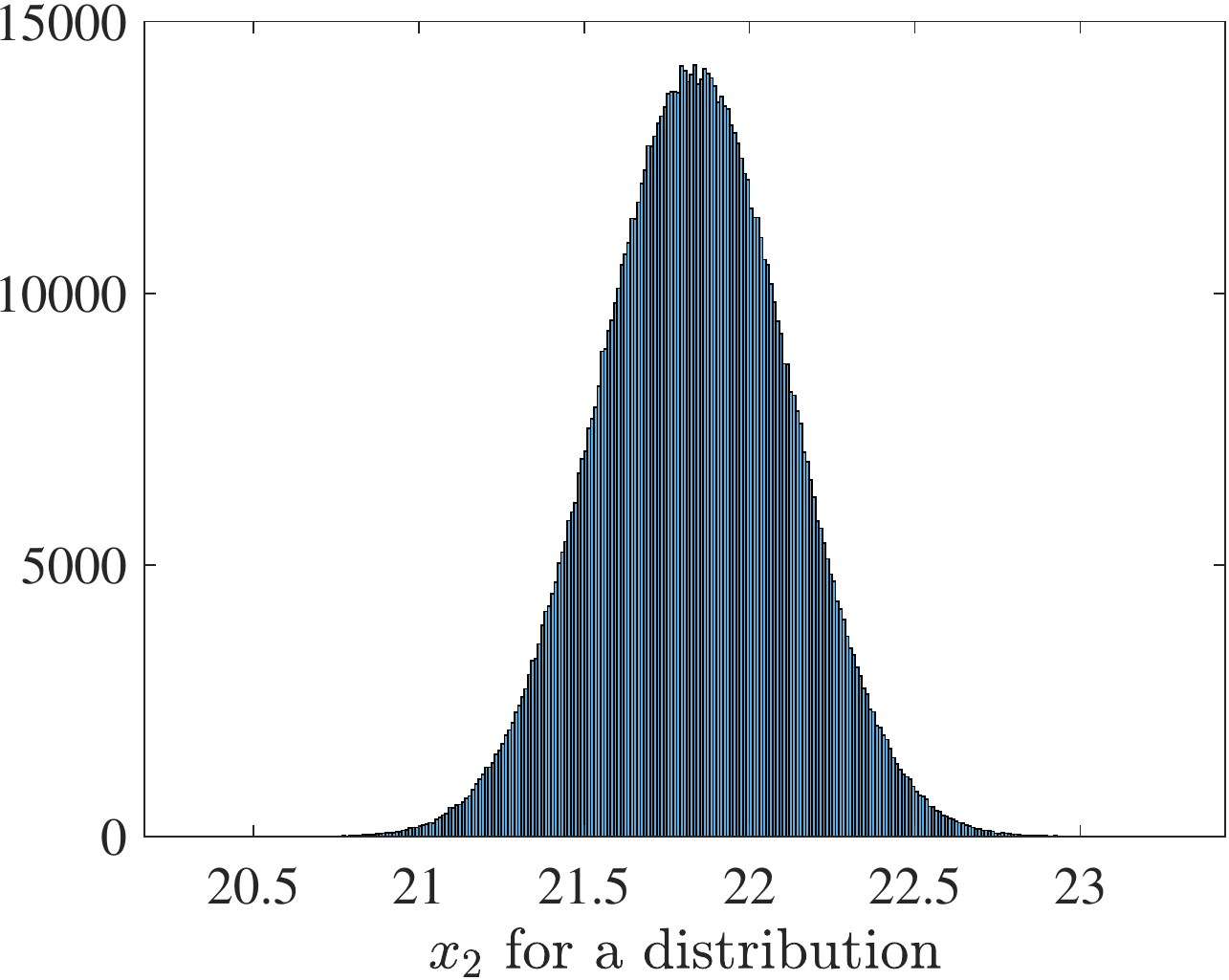}
  }
  \par
  \subfloat[]{
  \label{fig:2bar_ell1_sample_app_compl}
  \includegraphics[scale=0.40]{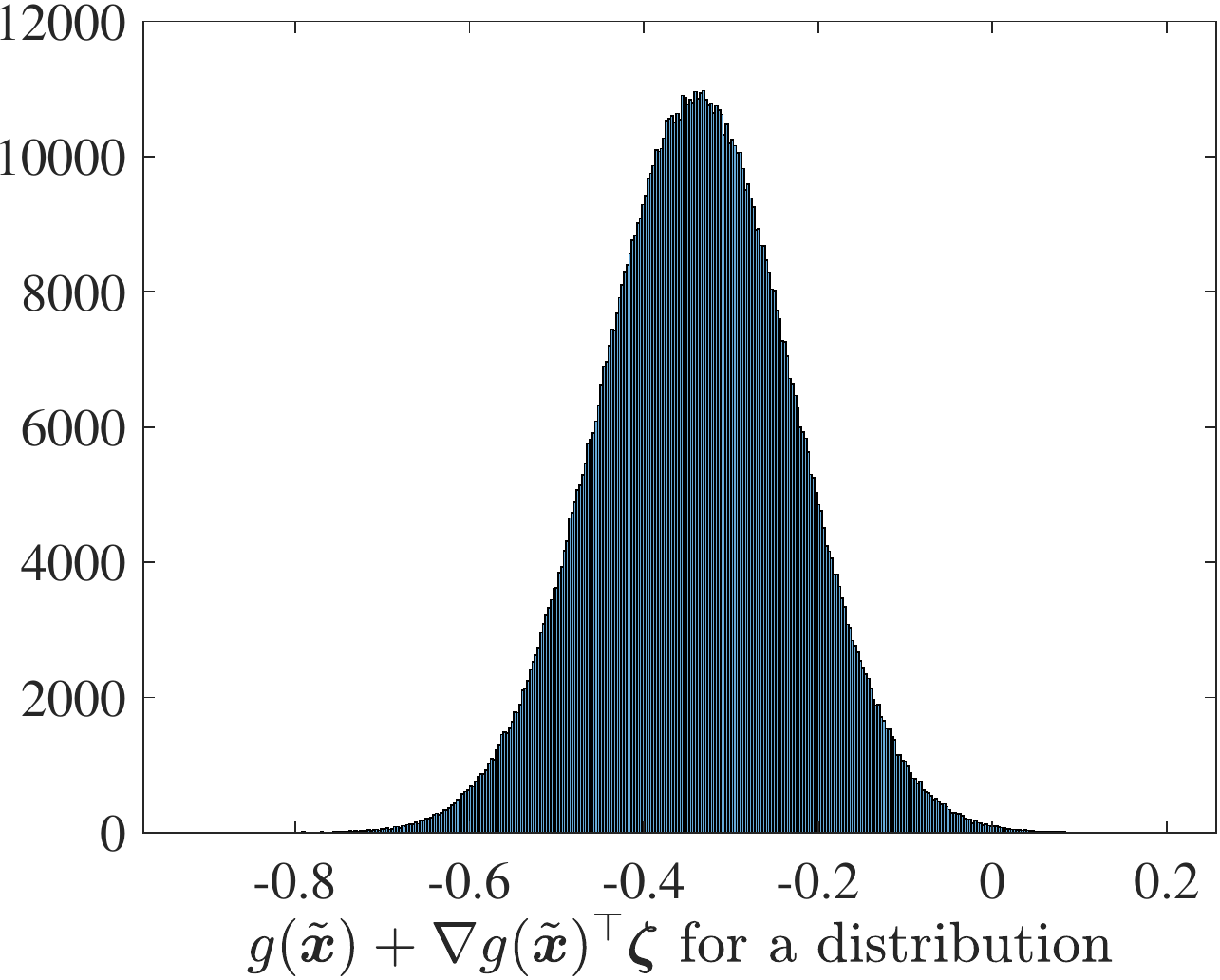}
  }
  \caption[]{Example of Monte Carlo simulation for a single sample of 
  $\bi{\mu}$ and $\varSigma$ (example (I) with the $\ell_{\infty}$-norm 
  uncertainty model; probability distributions are assumed 
  to be normal distributions). 
  \subref{fig:2bar_ell1_x1} 
  Samples of $x_{1}=\tilde{x}_{1}+\zeta_{1}$; 
  \subref{fig:2bar_ell1_x2} 
  samples of $x_{2}=\tilde{x}_{2}+\zeta_{2}$; and 
  \subref{fig:2bar_ell1_sample_app_compl} computed values of 
  $g(\tilde{\bi{x}}) + \nabla g(\tilde{\bi{x}})^{\top}\bi{\zeta}$.   }
\end{figure*}

\begin{figure*}[tbp]
  \centering
  \subfloat[]{
  \label{fig:2bar_ell1_prob_of_failure}
  \includegraphics[scale=0.40]{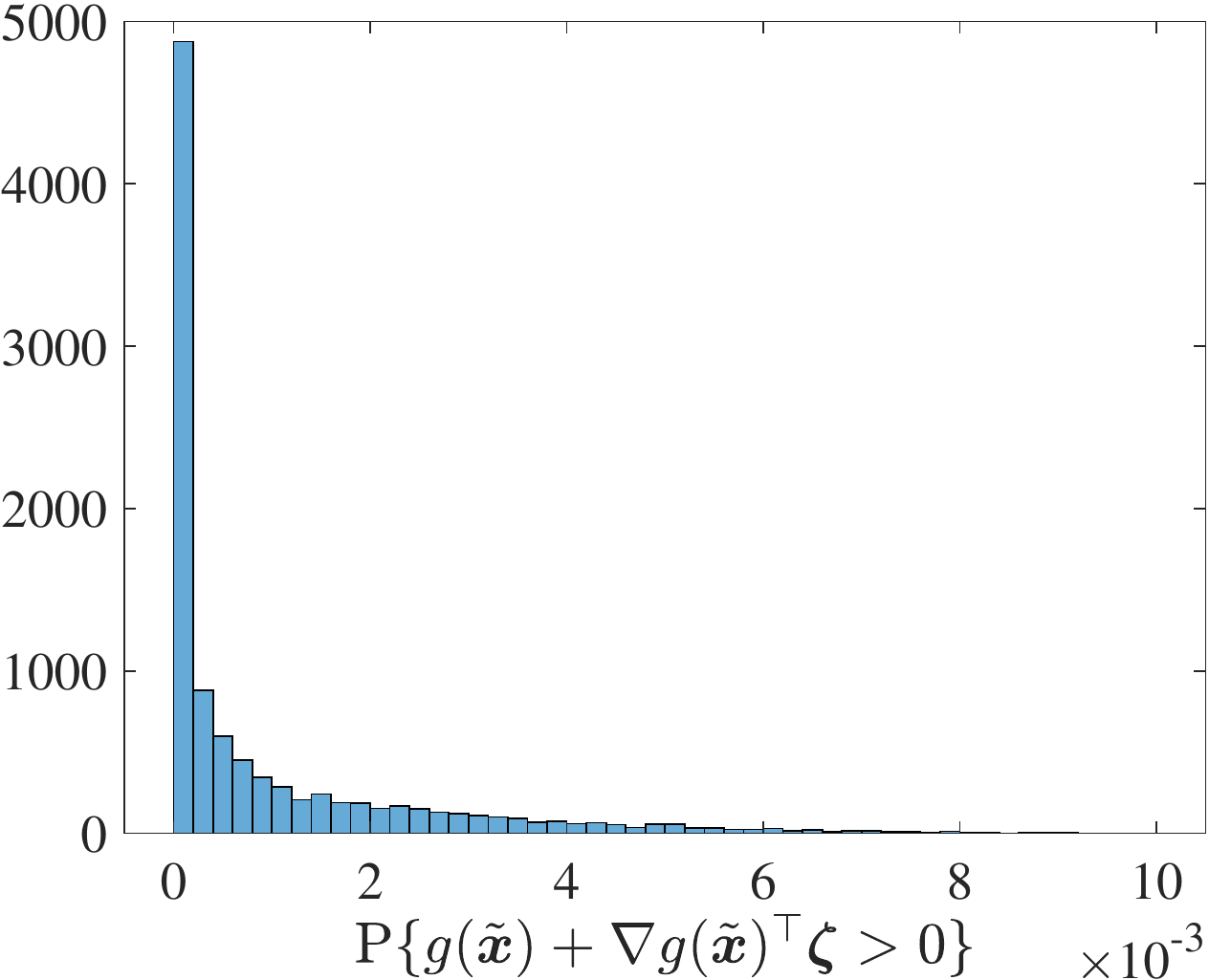}
  }
  \hfill
  \subfloat[]{
  \label{fig:2bar_ell1_exact_failure_prob}
  \includegraphics[scale=0.40]{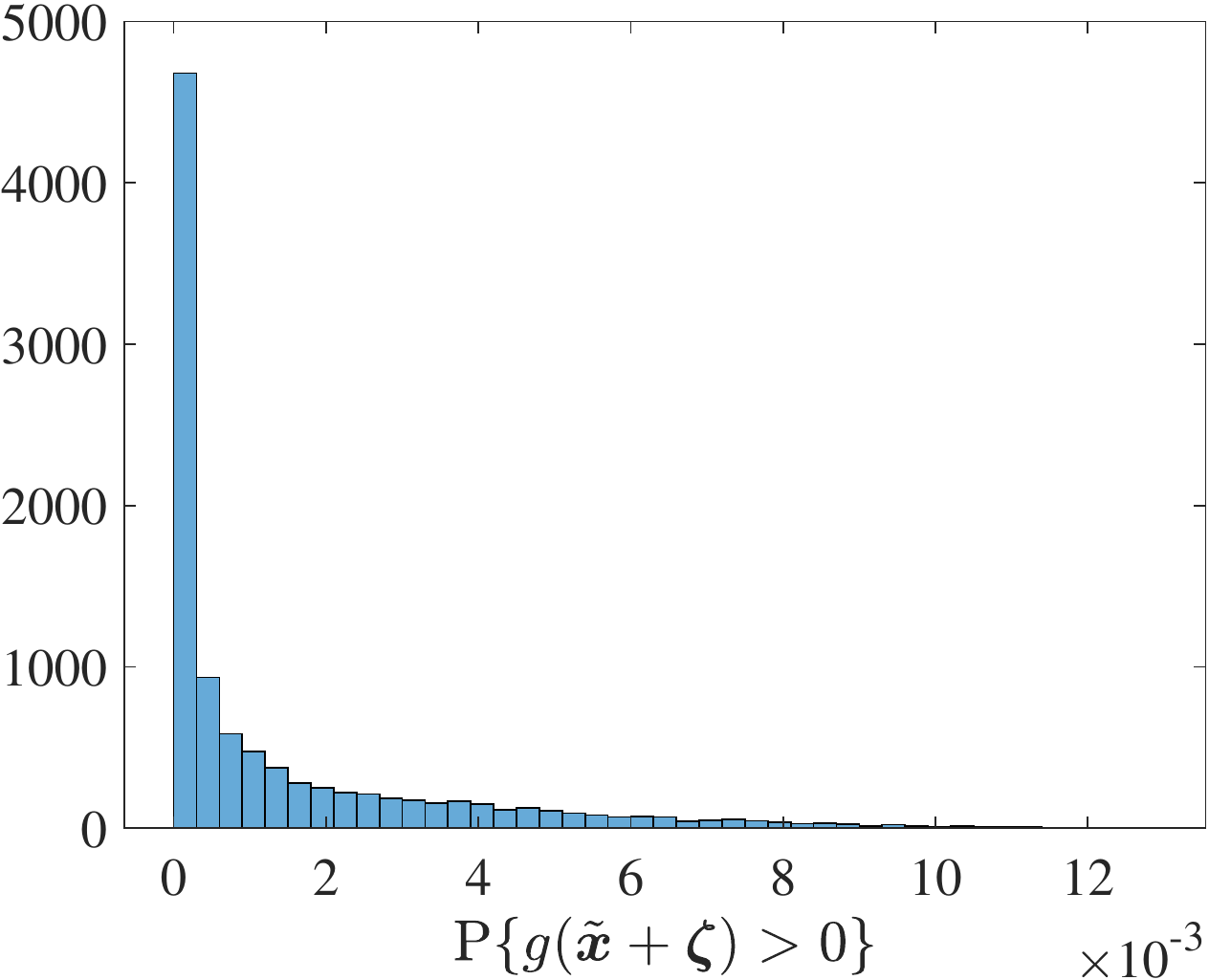}
  }
  \caption[]{Results of double-loop Monte Carlo simulation 
  (example (I) with the $\ell_{\infty}$-norm uncertainty model; probability 
  distributions are assumed to be normal distributions). 
  \subref{fig:2bar_ell1_prob_of_failure} Failure probability of linearly 
  approximated constraint 
  $g(\tilde{\bi{x}}) + \nabla g(\tilde{\bi{x}})^{\top}\bi{\zeta} \le 0$; and 
  \subref{fig:2bar_ell1_exact_failure_prob} failure probability of 
  constraint $g(\tilde{\bi{x}}+\bi{\zeta}) \le 0$ without approximation.   }
\end{figure*}

\begin{figure*}[tbp]
  \centering
  \subfloat[]{
  \label{fig:2bar_ell1_loop_reliability}
  \includegraphics[scale=0.40]{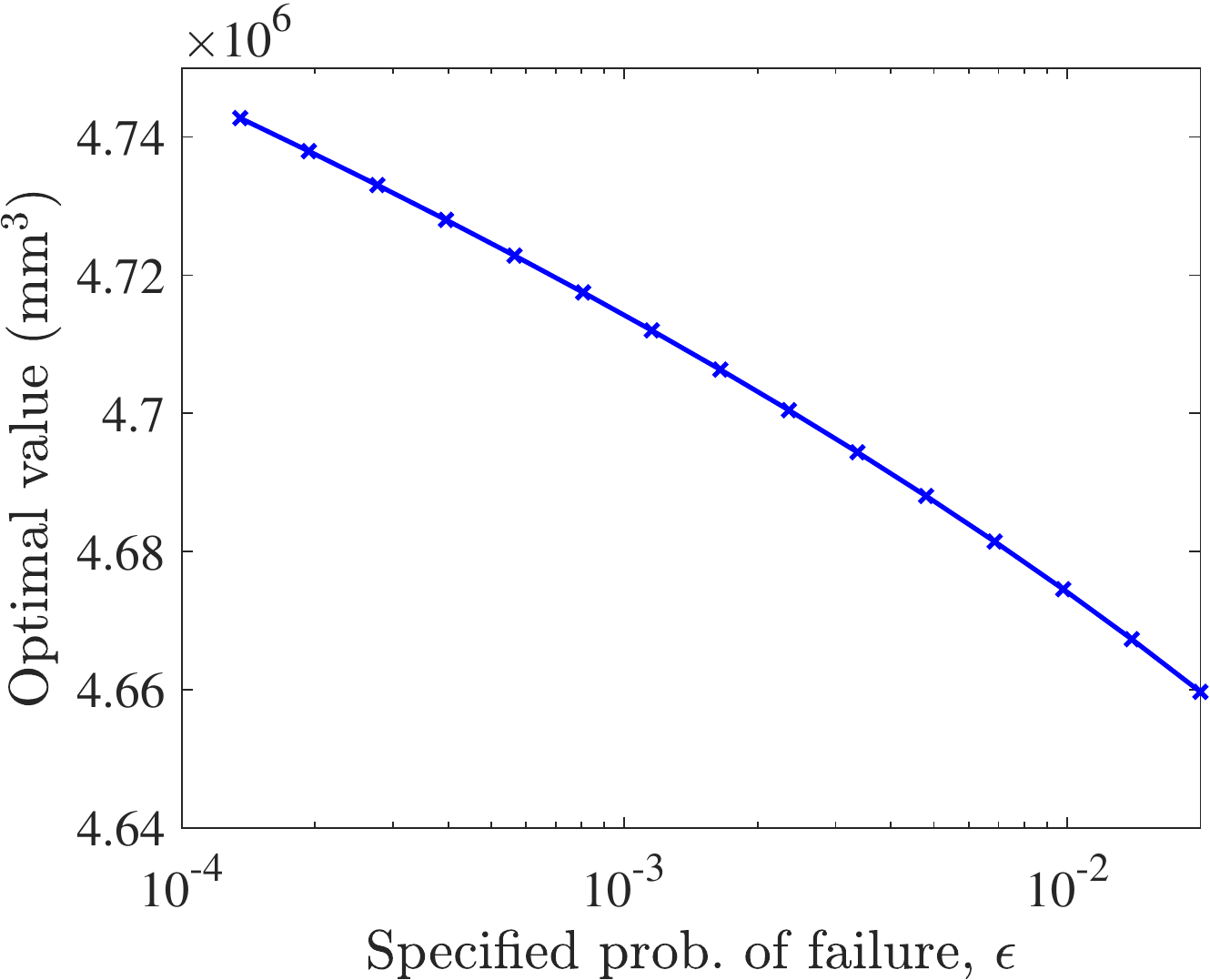}
  }
  \hfill
  \subfloat[]{
  \label{fig:2bar_ell1_loop_uncertainty}
  \includegraphics[scale=0.40]{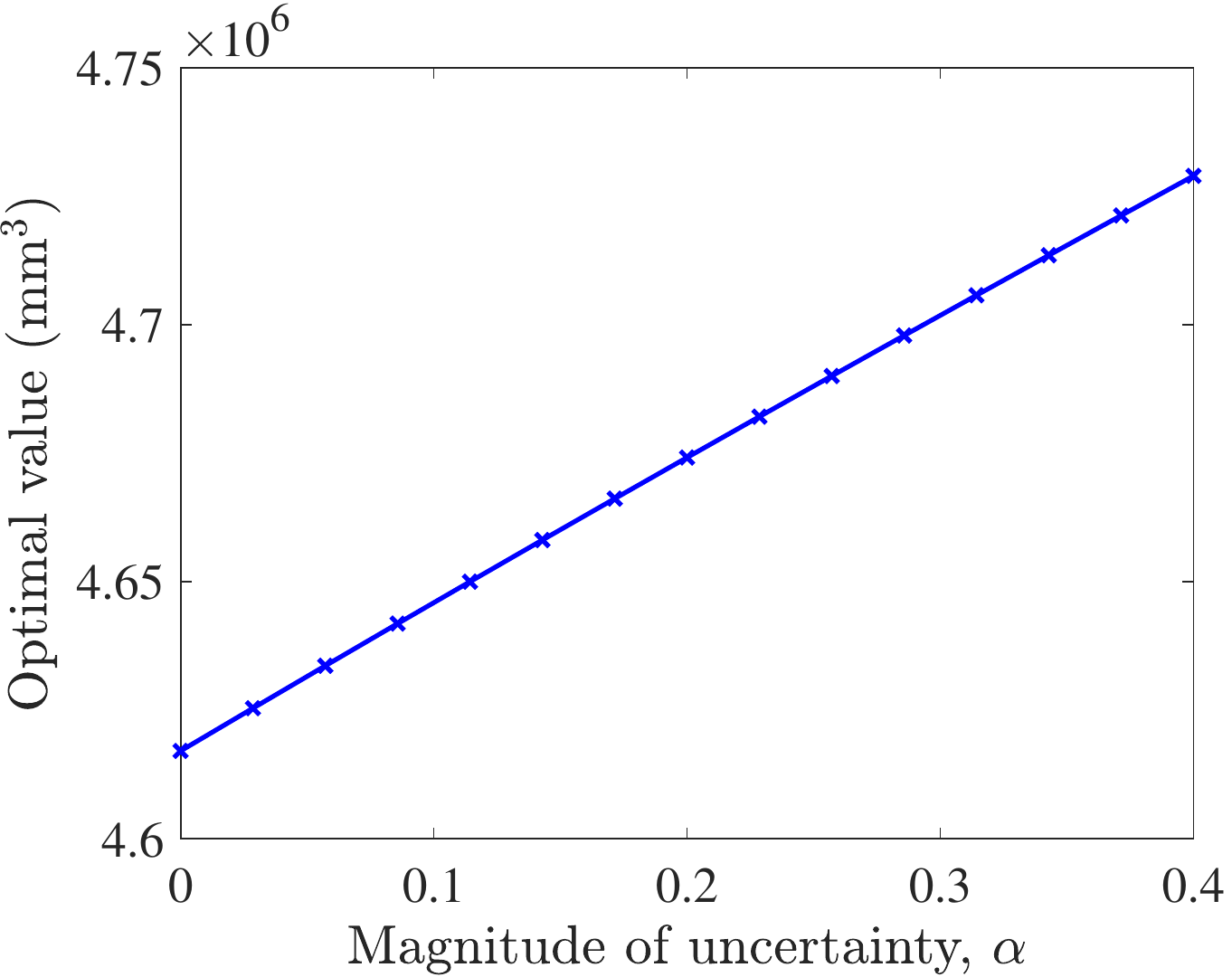}
  }
  \caption[]{Optimal value (example (I) with the $\ell_{\infty}$-norm 
  uncertainty model; probability distributions are assumed to be normal 
  distributions) versus 
  \subref{fig:2bar_ell1_loop_reliability} failure probability; and 
  \subref{fig:2bar_ell1_loop_uncertainty} magnitude of uncertainty.   }
  \label{fig:2bar_ell1_loop}
\end{figure*}

\begin{figure*}[tbp]
  \centering
  \includegraphics[scale=0.40]{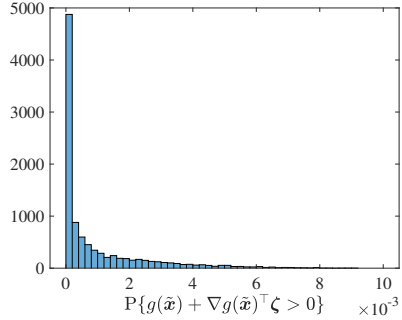}
  \caption[]{Results of double-loop Monte Carlo simulation 
  (example (I) with the $\ell_{2}$-norm uncertainty model; probability 
  distributions are assumed to be normal distributions). 
  Failure probability of linearly approximated constraint 
  $g(\tilde{\bi{x}}) + \nabla g(\tilde{\bi{x}})^{\top}\bi{\zeta} \le 0$.  }
  \label{fig:2bar_ell2_prob_of_failure}
\end{figure*}

\begin{figure*}[tbp]
  \centering
  \subfloat[]{
  \label{fig:2bar_ell2_loop_reliability}
  \includegraphics[scale=0.40]{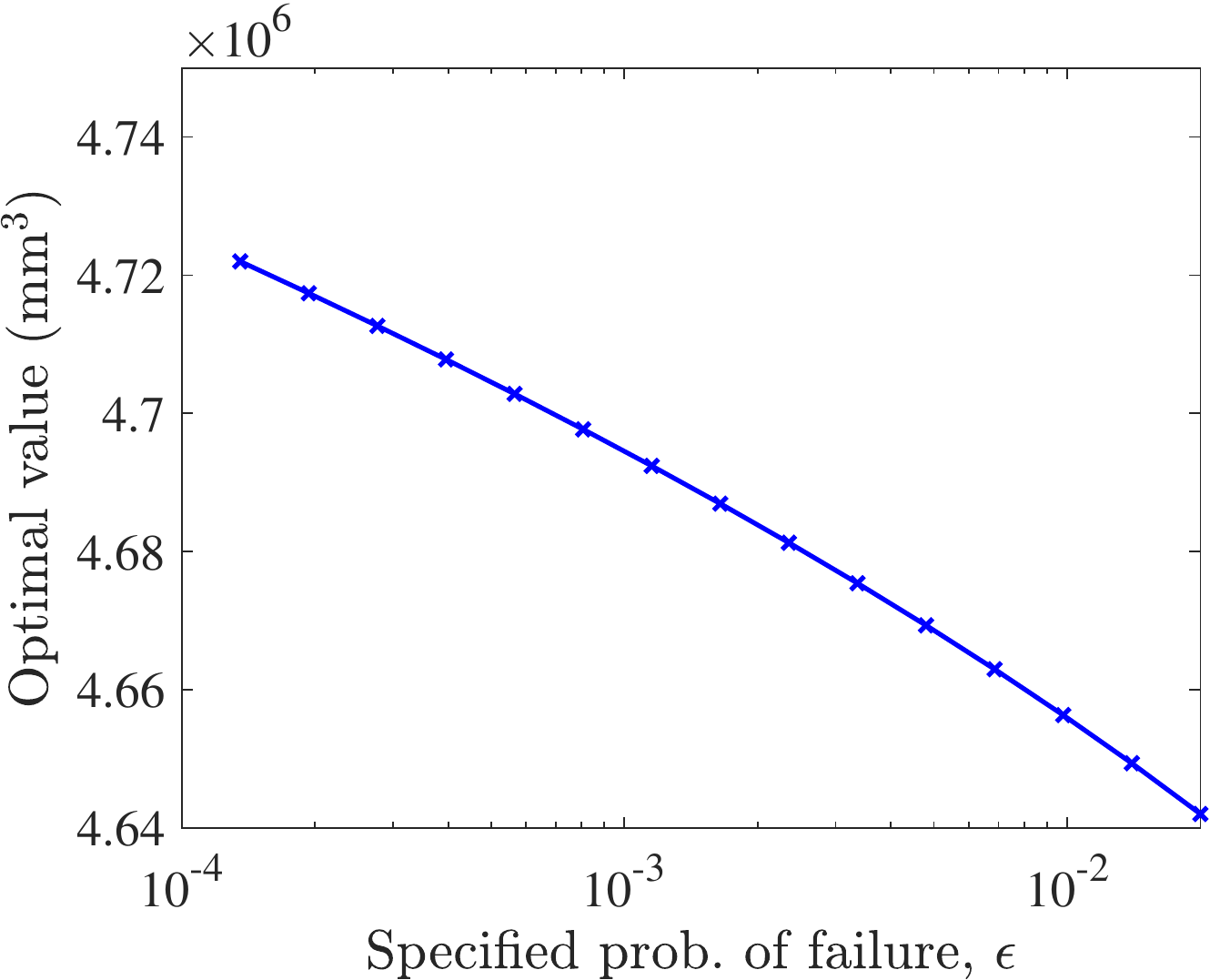}
  }
  \hfill
  \subfloat[]{
  \label{fig:2bar_ell2_loop_uncertainty}
  \includegraphics[scale=0.40]{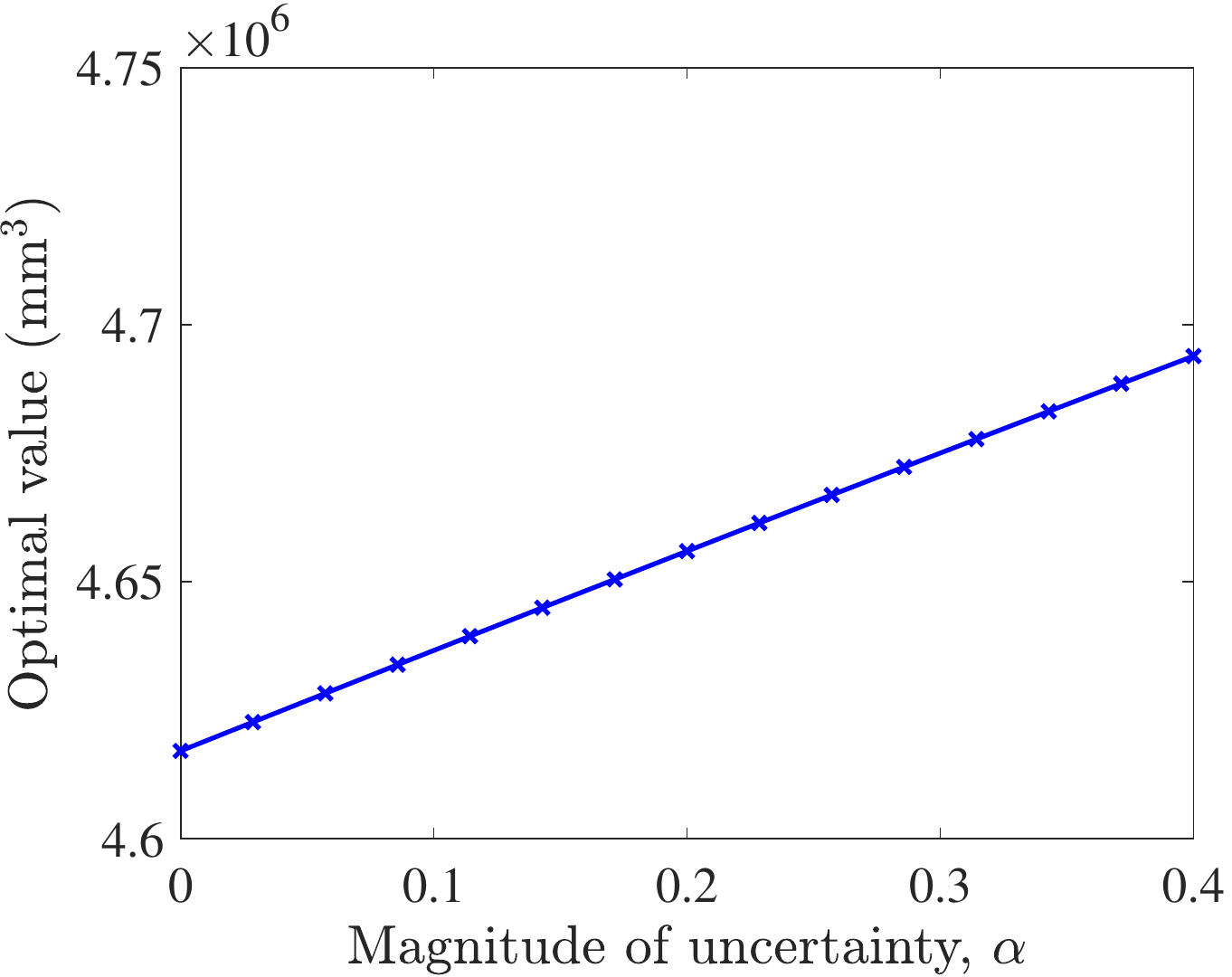}
  }
  \caption[]{Optimal value (example (I) with the $\ell_{2}$-norm 
  uncertainty model; probability distributions are assumed to be normal 
  distributions) versus 
  \subref{fig:2bar_ell2_loop_reliability}  failure probability; and 
  \subref{fig:2bar_ell2_loop_uncertainty} magnitude of uncertainty.   }
  \label{fig:2bar_ell2_loop}
\end{figure*}

\begin{figure*}[tbp]
  \centering
  \subfloat[]{
  \label{fig:2bar_ell1_all_dist_reliability}
  \includegraphics[scale=0.40]{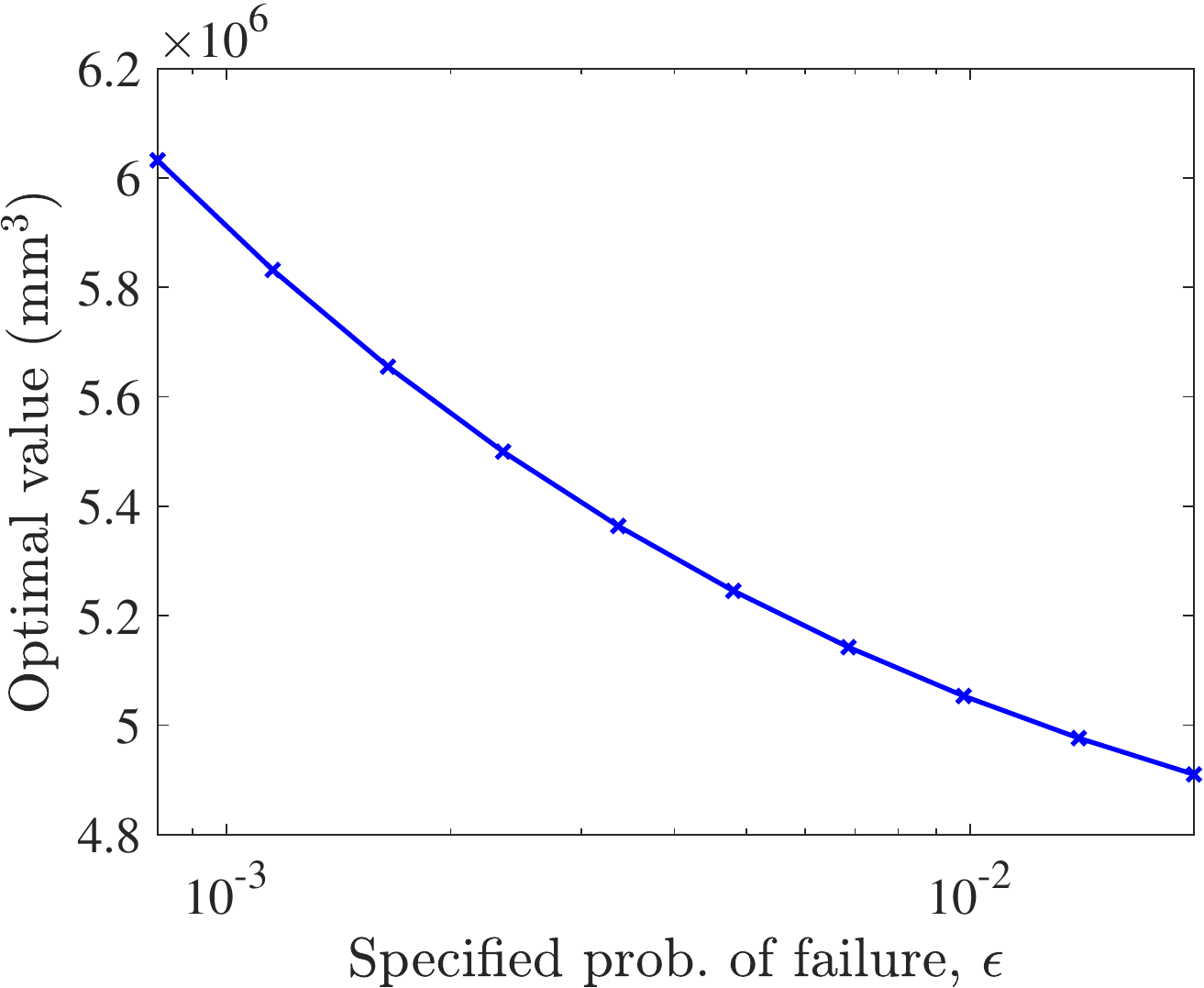}
  }
  \hfill
  \subfloat[]{
  \label{fig:2bar_ell1_all_dist_uncertainty}
  \includegraphics[scale=0.40]{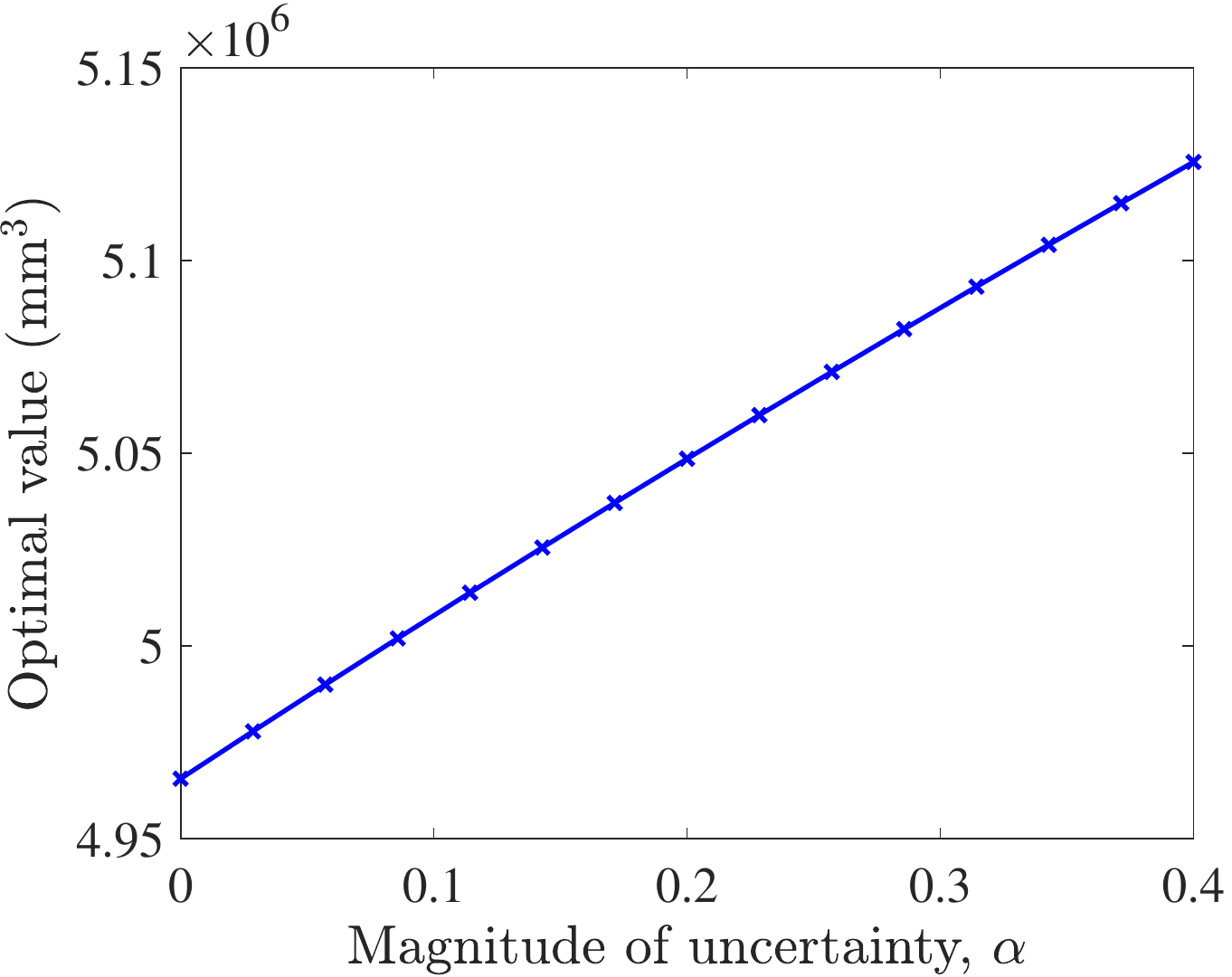}
  }
  \par
  \subfloat[]{
  \label{fig:2bar_ell2_all_dist_reliability}
  \includegraphics[scale=0.40]{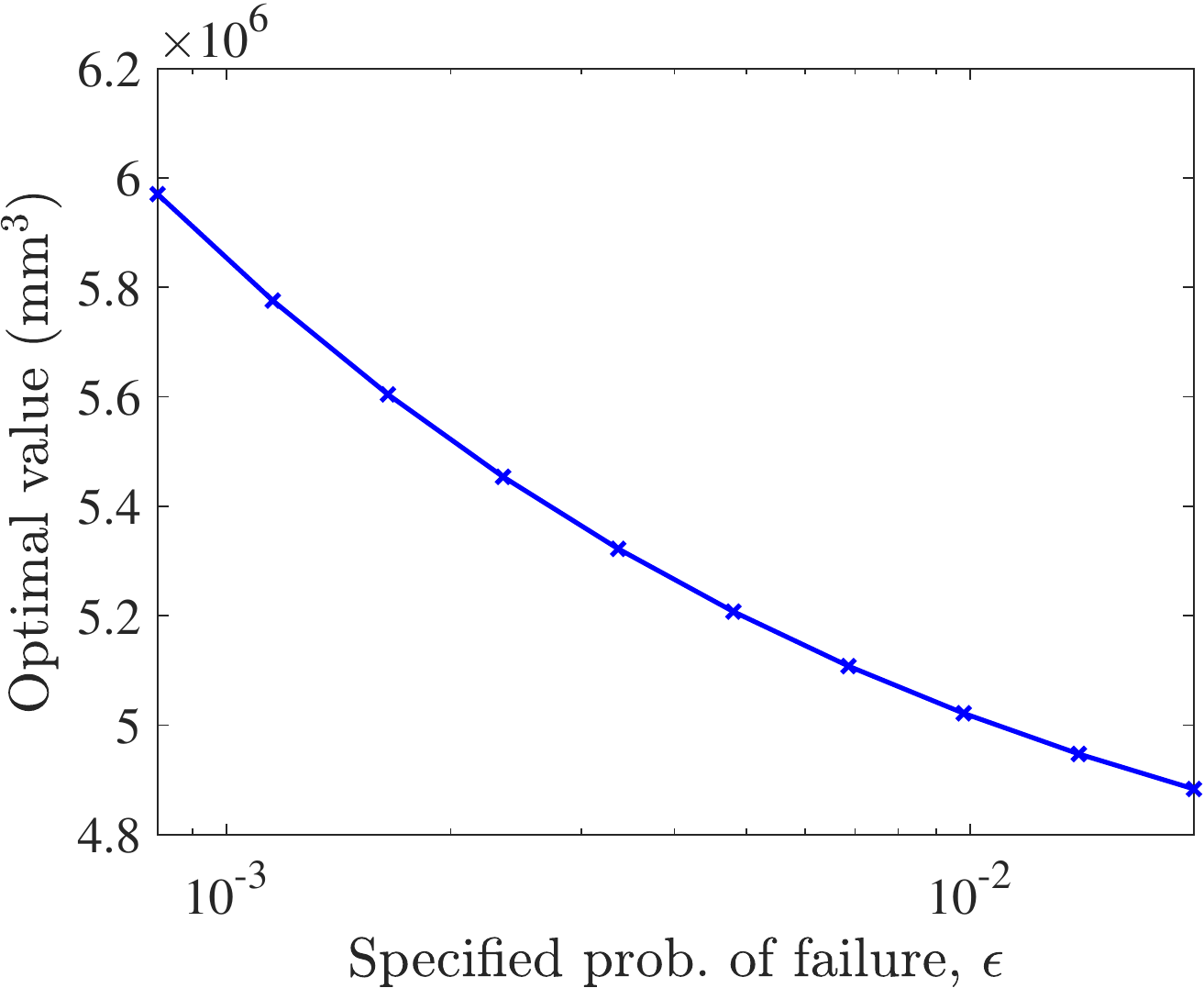}
  }
  \hfill
  \subfloat[]{
  \label{fig:2bar_ell2_all_dist_uncertainty}
  \includegraphics[scale=0.40]{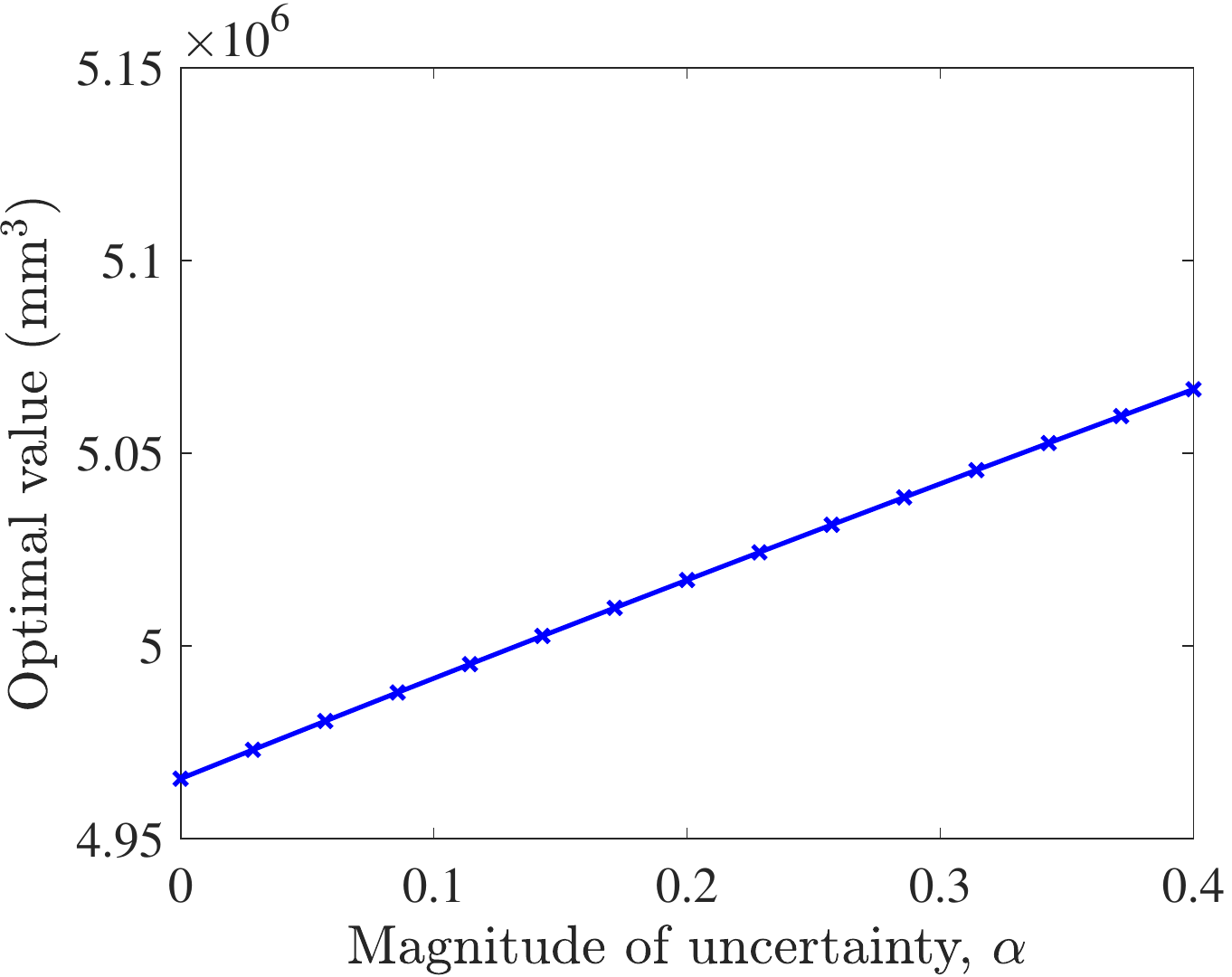}
  }
  \caption[]{Optimal value (example (I); 
  no restriction on distribution type is assumed) versus 
  \subref{fig:2bar_ell1_all_dist_reliability} 
  failure probability (with the $\ell_{\infty}$-norm uncertainty model); 
  \subref{fig:2bar_ell1_all_dist_uncertainty} 
  magnitude of uncertainty ($\ell_{\infty}$-norm); 
  \subref{fig:2bar_ell2_all_dist_reliability} 
  failure probability (with the $\ell_{2}$-norm uncertainty model); and 
  \subref{fig:2bar_ell2_all_dist_uncertainty} 
  magnitude of uncertainty ($\ell_{2}$-norm).    }
  \label{fig:2bar_ell2_all_dist}
\end{figure*}

\subsection{Example (I): 2-bar truss}

Consider a plane truss depicted in \reffig{fig:gs2bar}. 
The truss has $n=2$ members and $d=2$ degrees of freedom of the nodal 
displacements. 
The elastic modulus of the members is $20\,\mathrm{GPa}$. 
A vertical external force of $100\,\mathrm{kN}$ is applied at 
the free node. 
The upper bound for the compliance is $\bar{\pi}=100\,\mathrm{J}$. 

We first consider the uncertainty model with the $\ell_{\infty}$-norm, 
studied in section~\ref{sec:robust.ell.infty}. 
In the uncertainty model in \eqref{eq:uncertainty.set.ell.infty.1} 
and \eqref{eq:uncertainty.set.ell.infty.2}, we put 
$A=B=I$ with $m=k=n$, as considered in \refex{ex:ell.infty.uncertainty}. 
The best estimates, or the nominal values, of $\bi{\mu}$ and $\varSigma$ 
are set to 
\begin{align*}
  \tilde{\bi{\mu}}=\bi{0}, 
  \quad
  \tilde{\varSigma} = 
  \begin{bmatrix}
    0.07 & 0.02 \\ 0.02 & 0.07 \\
  \end{bmatrix}
  . 
\end{align*}
The magnitude of uncertainty is $\alpha=0.2$ and $\beta=0.01$. 
The specified upper bound for the failure probability is $\epsilon=0.01$. 
The optimal solution obtained by the proposed method is listed in the 
row ``$\ell_{\infty}$-norm unc.''\ of \reftab{tab:solution.ex.I}, where 
``obj.\ val.''\ means the objective value at the obtained solution. 
For comparison, the optimal solution of the nominal optimization problem 
(i.e., the conventional structural volume minimization under the 
compliance constraint without considering uncertainty) is also listed. 

The optimization result was verified as follows. 
We randomly generate $\bi{\mu} \in U_{\bi{\mu}}$  and 
$\varSigma \in U_{\varSigma}$, and then generate $10^{6}$ samples 
drawn as $\bi{\zeta} \sim \NC(\bi{\mu},\varSigma)$. 
\reffig{fig:2bar_ell1_x1} and \reffig{fig:2bar_ell1_x2} show the samples 
of $\bi{x}=\tilde{\bi{x}}+\bi{\zeta}$ generated in this manner. 
\reffig{fig:2bar_ell1_sample_app_compl} shows the values of the linearly 
approximated constraint function, 
\begin{align*}
  g(\tilde{\bi{x}}) 
  + \nabla g(\tilde{\bi{x}})^{\top}\bi{\zeta} 
  = \pi(\tilde{\bi{x}}) 
  + \nabla \pi(\tilde{\bi{x}})^{\top} \bi{\zeta} - \bar{\pi} , 
\end{align*}
for these samples. 
Therefore, the ratio of the number of samples of which these function 
values are positive 
divided by the number of all samples (i.e., $10^{6}$) 
should be no greater than $\epsilon$ $(=0.01)$. 
We computed this ratio for each of $10^{4}$ randomly generated samples 
of $\bi{\mu} \in U_{\bi{\mu}}$ and $\varSigma \in U_{\varSigma}$, 
where the continuous uniform distribution was used to generate 
samples of the components of $\bi{\mu}$ and $\varSigma$. 
\reffig{fig:2bar_ell1_prob_of_failure} shows the histogram of 
the values of this ratio computed in this manner, i.e., it shows 
distribution of the failure probability estimated by 
double-loop Monte Carlo simulation. 
It is observed in \reffig{fig:2bar_ell1_prob_of_failure} that, 
for every one of $10^{4}$ probability distribution samples, 
the failure probability is no greater than $\epsilon$. 
Thus, it is verified that the obtained solution satisfies the 
distributionally-robust reliability constraint in 
\eqref{eq:robust.reliable}. 
Indeed, among these samples of the failure probability, the maximum 
value is $0.009054$ $(<\epsilon)$. 
For reference, \reffig{fig:2bar_ell1_exact_failure_prob} shows the 
histogram of failure probabilities computed for the constraint function 
values without applying the linear approximation, i.e., 
$g(\bi{x}) = \pi(\bi{x}) - \bar{\pi}$. 
It is observed in \reffig{fig:2bar_ell1_exact_failure_prob} that, in 
only rare cases, the failure probability exceeds the target value 
$\epsilon$ $(=0.01)$. 
\reffig{fig:2bar_ell1_loop_reliability} shows the variation of the 
optimal value with respect to the upper bound for the failure 
probability $\epsilon$, where $\alpha=0.2$ and $\beta=0.01$ are fixed. 
As $\epsilon$ decreases, the optimal value increases. 
In contrast, \reffig{fig:2bar_ell1_loop_uncertainty} shows the variation 
of the optimal value with respect to $\alpha$ and $\beta$, where 
$\epsilon=0.01$ is fixed. 
Although in \reffig{fig:2bar_ell1_loop_uncertainty} only values of 
$\alpha$ are shown, values of $\beta \in [0,0.02]$ are also varied in a 
manner proportional to $\alpha$. 
As the magnitude of uncertainty increases, the optimal value increases. 

We next consider the uncertainty model with the $\ell_{2}$-norm, 
studied in section~\ref{sec:robust.ell.2}. 
The uncertainty set is defined with $A$, $B$, $\tilde{\bi{\mu}}$, 
$\tilde{\varSigma}$, $\alpha$, and $\beta$ used above. 
The specified upper bound for the failure probability is $\epsilon=0.01$. 
The obtained optimal solution is listed in the row 
``$\ell_{2}$-norm unc.''\ of \reftab{tab:solution.ex.I}. 
It can be observed that the objective value is small compared with the 
solution with the $\ell_{\infty}$-norm uncertainty model. 
This is natural, because, with the common values of $\alpha$ and $\beta$, 
the uncertainty set with the $\ell_{2}$-norm is included in 
the uncertainty set with the $\ell_{\infty}$-norm. 
The optimization result is verified in the same manner as above. 
Namely, \reffig{fig:2bar_ell2_prob_of_failure} shows $10^{4}$ samples of 
the failure probability, each of which was computed with $10^{6}$ 
samples of $\bi{\zeta}$. 
Among these samples, the maximum failure probability is 
$0.009850$ $(< \epsilon)$, which verifies that the obtained solution 
satisfies distributionally-robust reliability 
constraint \eqref{eq:robust.reliable}. 
\reffig{fig:2bar_ell2_loop_reliability} and 
\reffig{fig:2bar_ell2_loop_uncertainty} show the variations of the 
optimal value with respect to the failure probability, $\epsilon$, and 
the magnitude of uncertainty, $\alpha$ and $\beta$, respectively. 
These variations show trends similar to the ones with the 
$\ell_{\infty}$-norm uncertainty model in 
\reffig{fig:2bar_ell1_loop_reliability} and 
\reffig{fig:2bar_ell1_loop_uncertainty}. 

Finally, as discussed in section~\ref{sec:general.non-Gaussian}, we 
consider, not only the normal distributions, but all the probability 
distributions with $\bi{\mu}$ and $\varSigma$ belonging to the 
uncertainty set. 
That is, the set of possible realizations of probability distributions 
is given by \eqref{eq:set.of.all.distributions}. 
\reffig{fig:2bar_ell2_all_dist}  collects the variations of the optimal 
value with respect to the failure probability, $\epsilon$, and 
the magnitude of uncertainty, $\alpha$ and $\beta$ (in the same manner 
as above, values of $\beta \in [0,0.02]$ are varied in a manner 
proportional to $\alpha$). 
Compared with the results for normal distributions in 
\reffig{fig:2bar_ell1_loop} and \reffig{fig:2bar_ell2_loop}, 
the optimal value in \reffig{fig:2bar_ell2_all_dist} is large, as expected. 
Moreover, as $\epsilon$ decreases, the optimal value in 
\reffig{fig:2bar_ell1_all_dist_reliability} and 
\reffig{fig:2bar_ell2_all_dist_reliability} increases drastically 
compared with the cases in \reffig{fig:2bar_ell1_loop_reliability} and 
\reffig{fig:2bar_ell2_loop_reliability}.

\subsection{Example (II): 29-bar truss}

\begin{figure}[tp]
  \centering
  \includegraphics[scale=0.70]{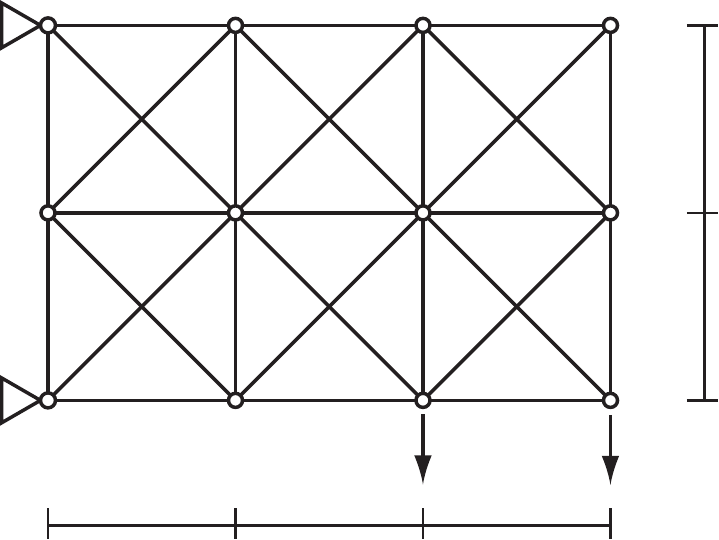}
  \begin{picture}(0,0)
    \put(-172,-38){
    \put(168,82){{\small $1\,\mathrm{m}$}}
    \put(168,120){{\small $1\,\mathrm{m}$}}
    \put(46,31){{\small $1\,\mathrm{m}$}}
    \put(80,31){{\small $1\,\mathrm{m}$}}
    \put(120,31){{\small $1\,\mathrm{m}$}}
    }
  \end{picture}
  \bigskip
  \caption{Problem setting of example (II): 29-bar truss. }
  \label{fig:gs29bar}
\end{figure}

\begin{table*}[tbp]
  \centering
  \caption{Optimal solutions of example (II) 
  with $\epsilon=0.01$, $\alpha=0.2$, and $\beta=0.01$ 
  (probability distributions are assumed to be normal distributions).  }
  \label{tab:solution.ex.II}
  \begin{tabular}{lrr}
    \toprule
    & Obj.\ val.\ ($\mathrm{mm^{3}}$) & $\pi(\tilde{\bi{x}})$ (J) \\
    \midrule
    Nominal optim.\ 
    & $1.6616 \times 10^{7}$ & 1000.00 \\
    $\ell_{\infty}$-norm unc.\
    & $1.7918 \times 10^{7}$ & 917.66 \\
    $\ell_{2}$-norm unc.\
    & $1.7475 \times 10^{7}$ & 944.21 \\
    \bottomrule
  \end{tabular}
\end{table*}

\begin{figure*}[tbp]
  \centering
  \subfloat[]{
  \label{fig:prog2_ell2_loop_reliability}
  \includegraphics[scale=0.40]{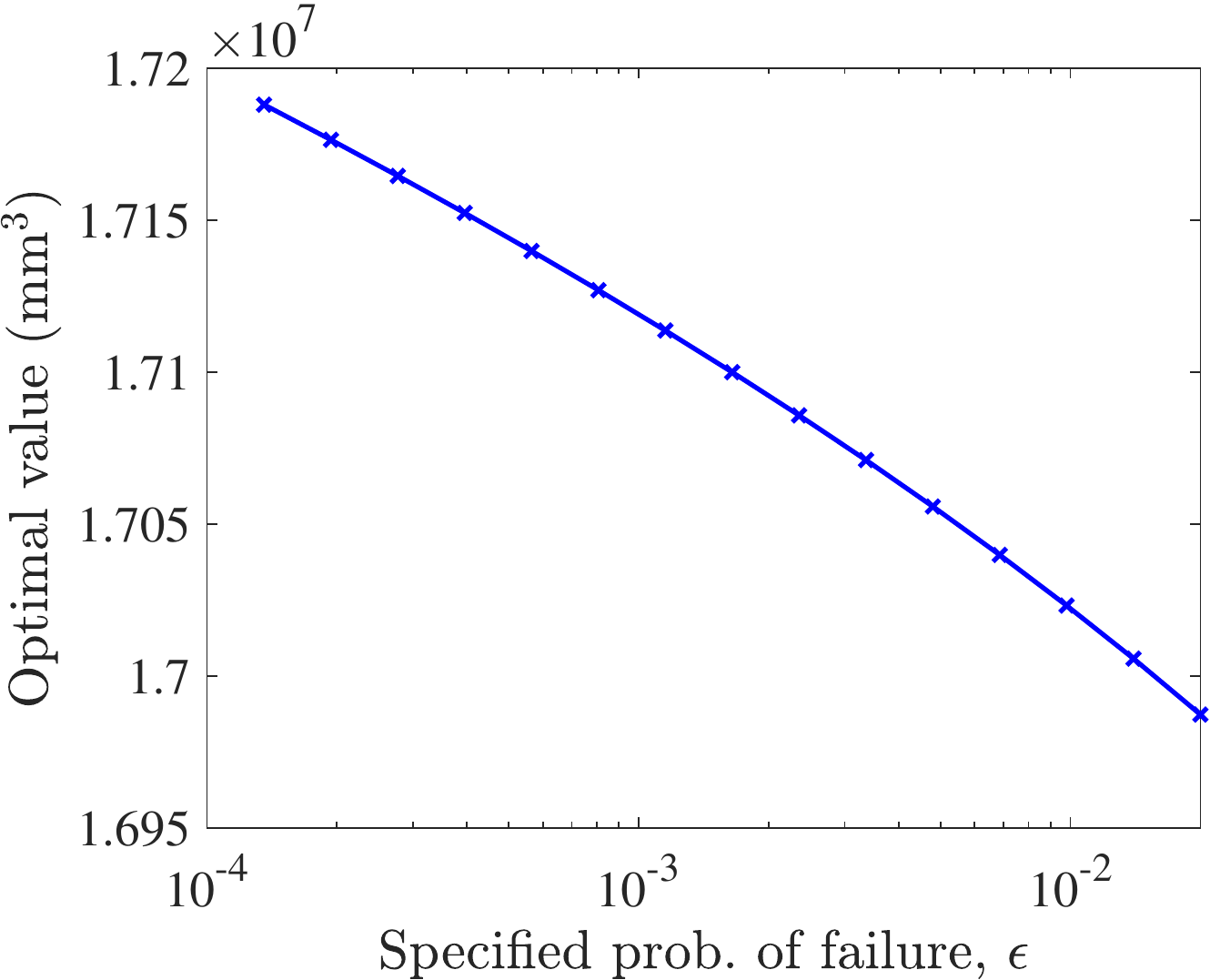}
  }
  \hfill
  \subfloat[]{
  \label{fig:prog2_ell2_loop_uncertainty}
  \includegraphics[scale=0.40]{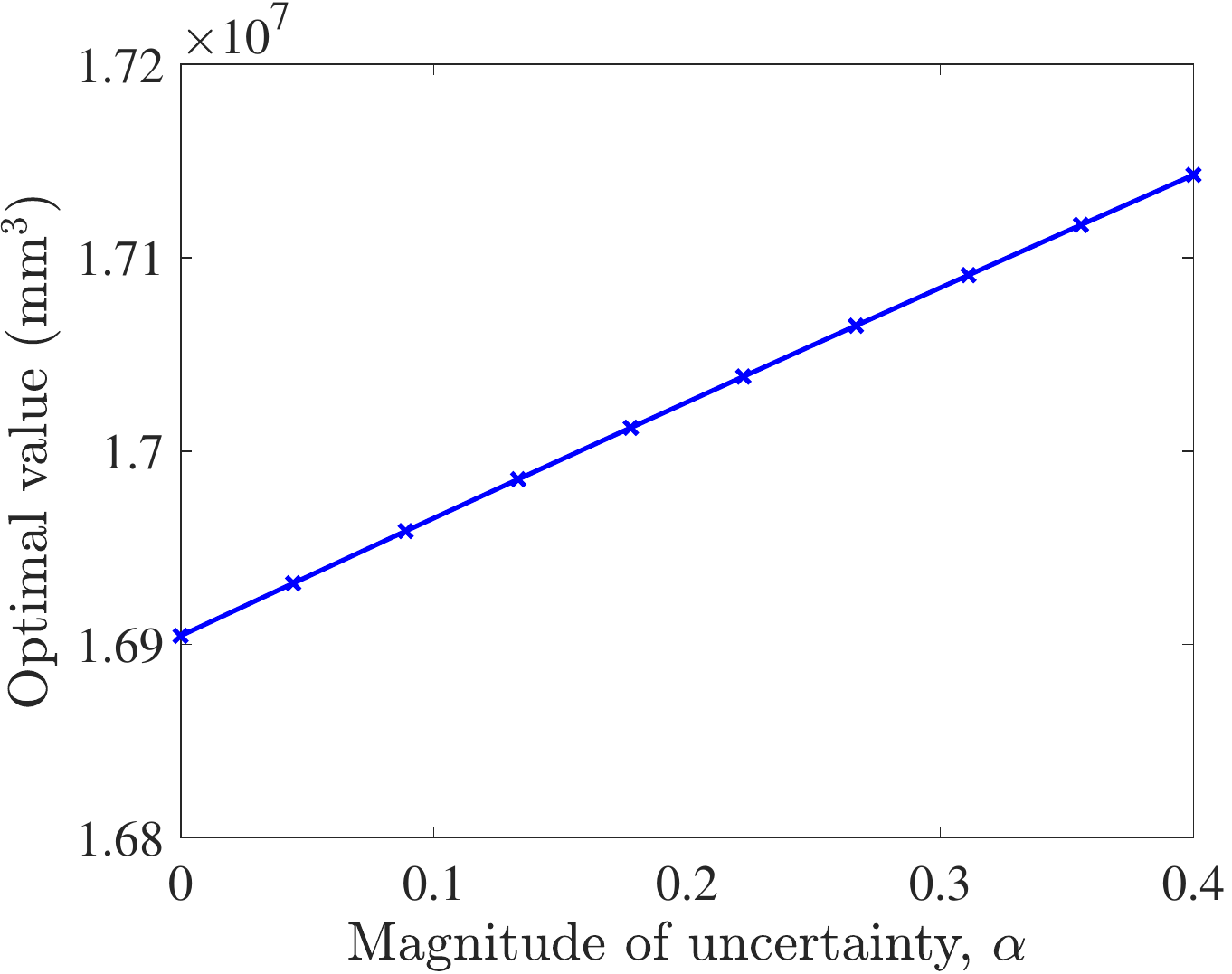}
  }
  \caption[]{Optimal value (example (II) with the $\ell_{2}$-norm 
  uncertainty model; probability distributions are assumed to be normal 
  distributions) versus 
  \subref{fig:prog2_ell2_loop_reliability} failure probability; and 
  \subref{fig:prog2_ell2_loop_uncertainty} magnitude of uncertainty.   }
\end{figure*}

\begin{figure*}[tbp]
  \centering
  \subfloat[]{
  \label{fig:prog2ell2_all_dist_reliability}
  \includegraphics[scale=0.40]{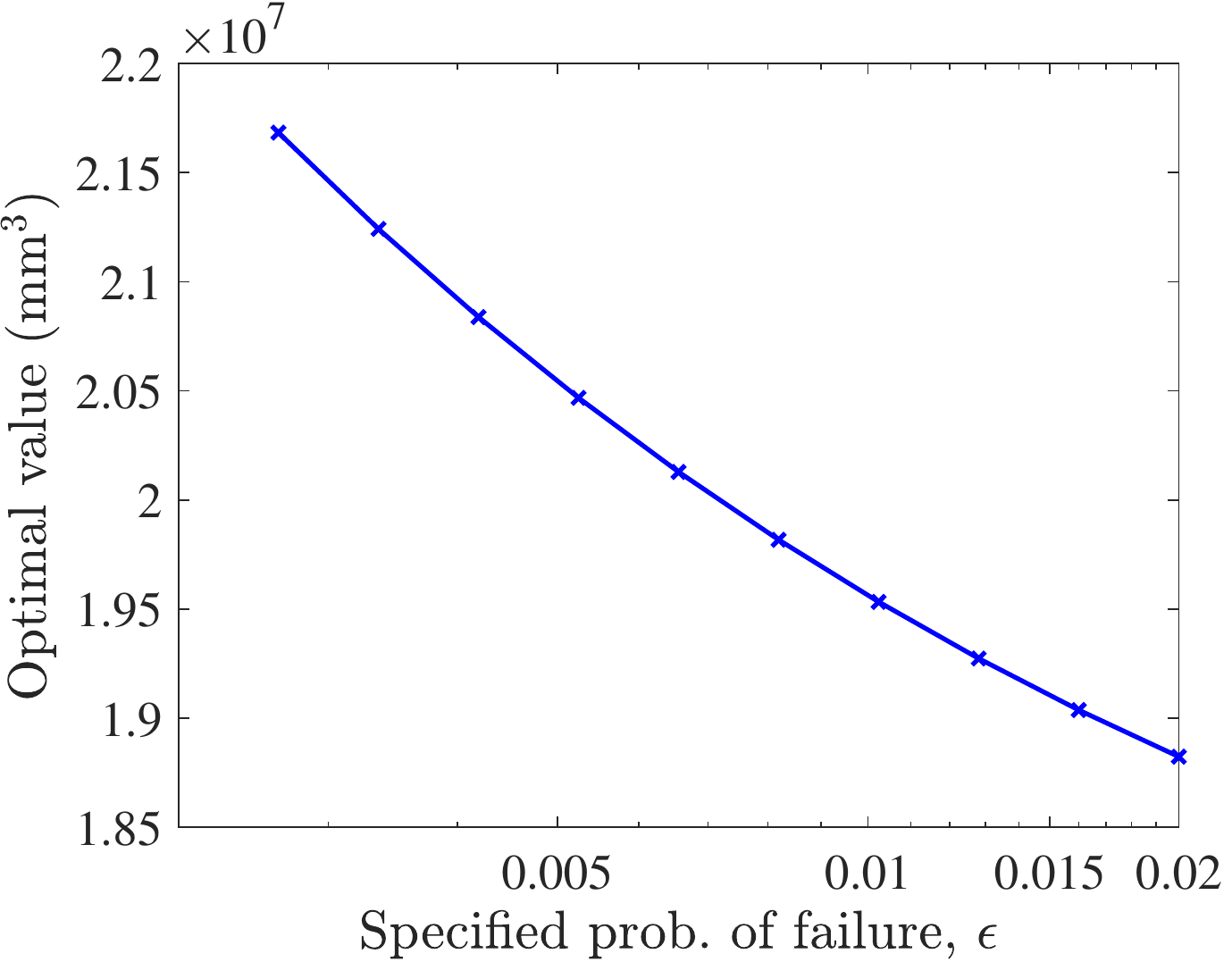}
  }
  \hfill
  \subfloat[]{
  \label{fig:prog2ell2_all_dist_uncertainty}
  \includegraphics[scale=0.40]{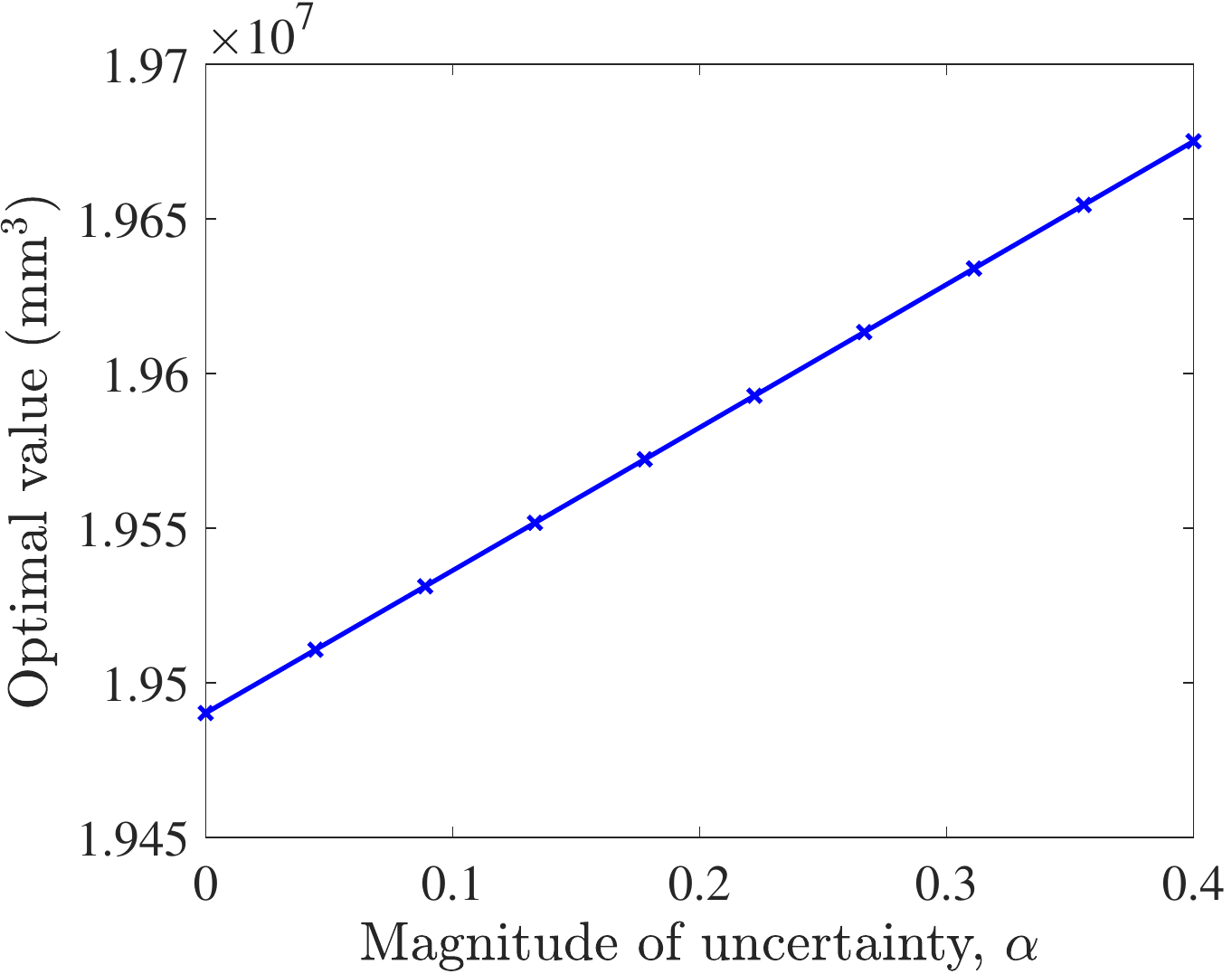}
  }
  \caption[]{Optimal value (example (II) with the $\ell_{2}$-norm 
  uncertainty model; no restriction on distribution type is assumed) 
  versus 
  \subref{fig:prog2ell2_all_dist_reliability} failure probability; and 
  \subref{fig:prog2ell2_all_dist_uncertainty} magnitude of uncertainty.   }
\end{figure*}

\begin{figure*}[tbp]
  \centering
  \subfloat[]{
  \label{fig:prog2_nominal}
  \includegraphics[scale=0.40]{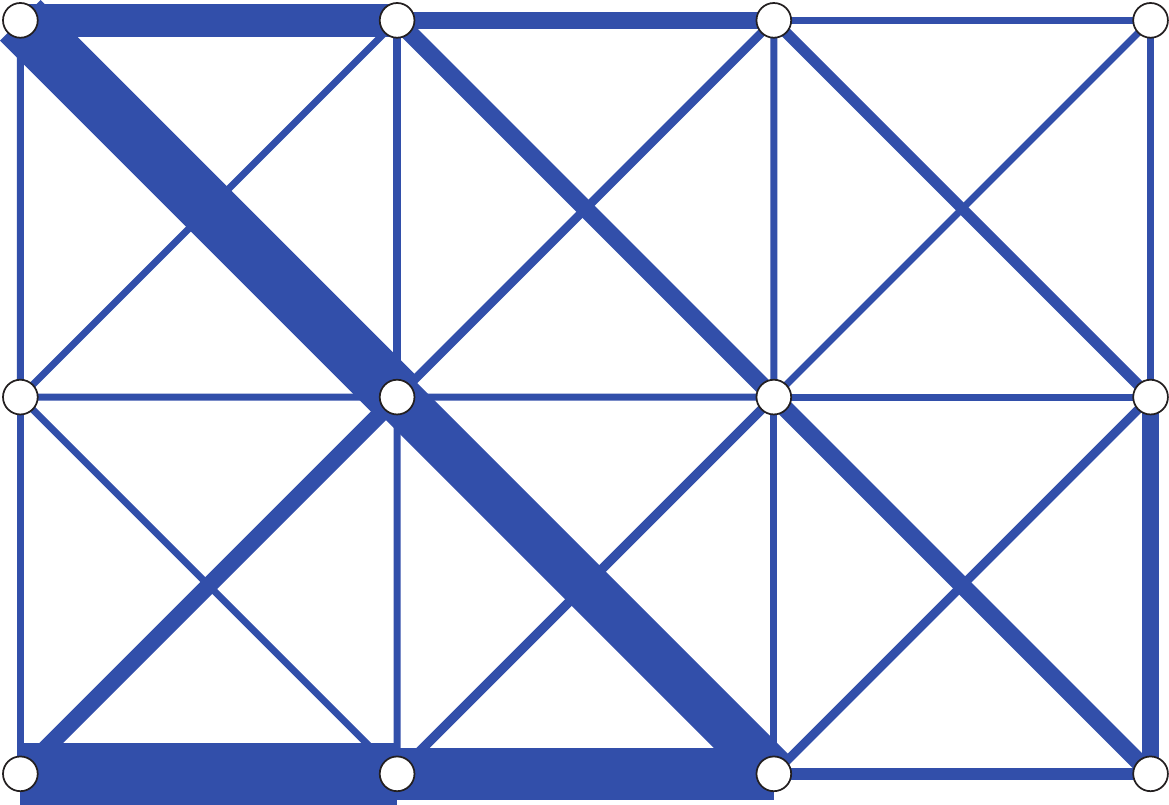}
  }
  \par
  \subfloat[]{
  \label{fig:prog2_ell1_design}
  \includegraphics[scale=0.40]{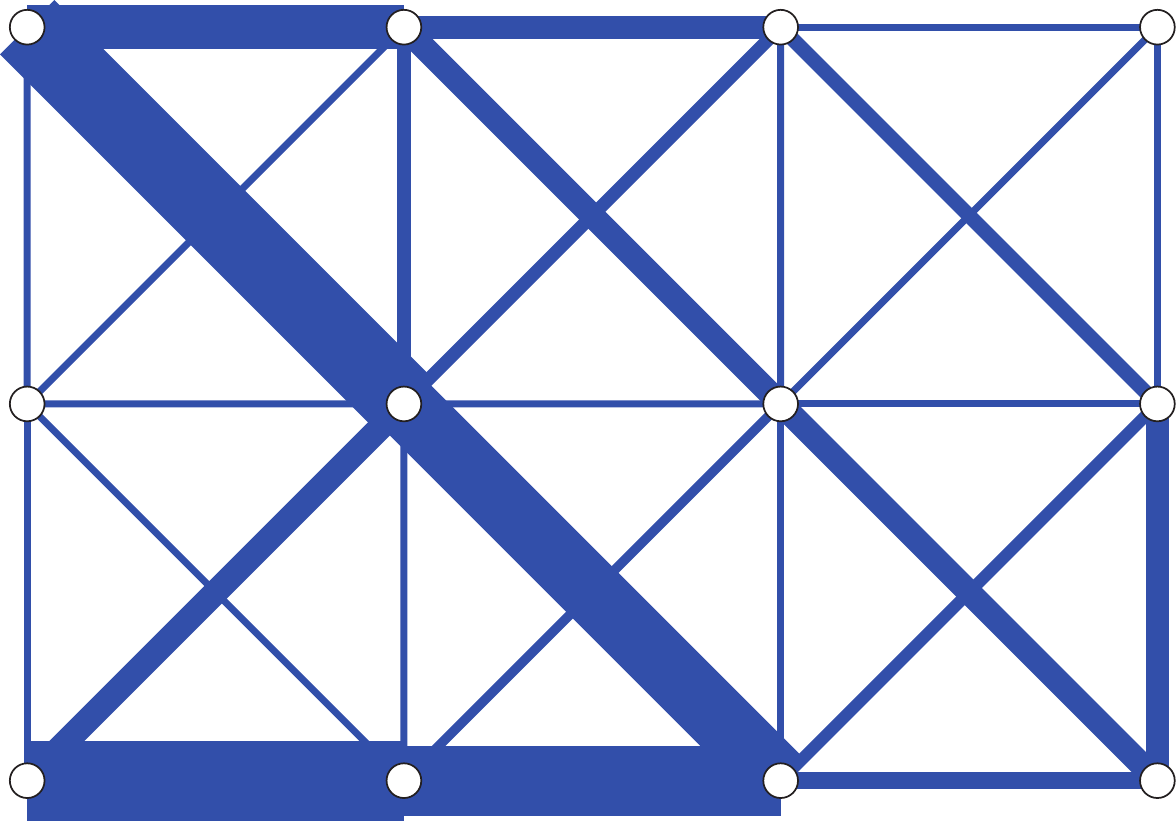}
  }
  \hfill
  \subfloat[]{
  \label{fig:prog2_ell2_design}
  \includegraphics[scale=0.40]{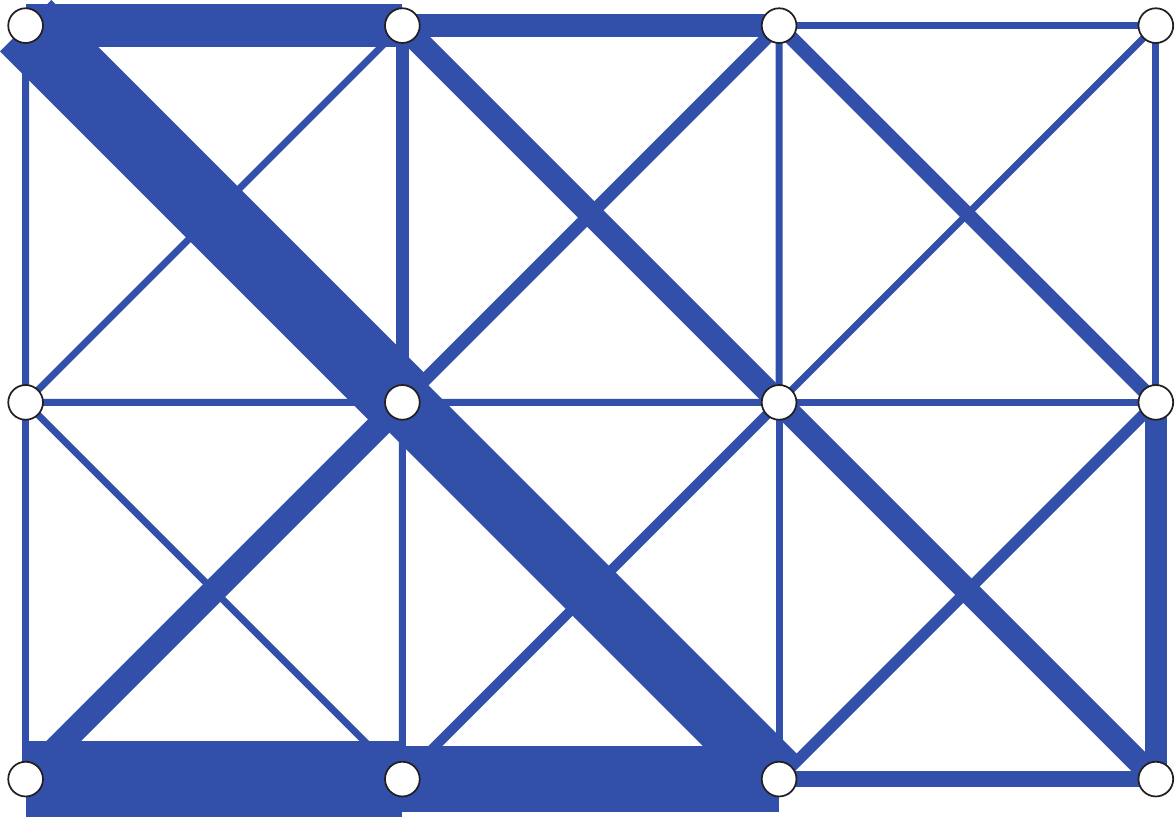}
  }
  \caption[]{Obtained designs of example (II). Optimal solutions 
  \subref{fig:prog2_nominal} without considering uncertainty; 
  \subref{fig:prog2_ell1_design} 
  with the $\ell_{\infty}$-norm uncertainty model; and 
  \subref{fig:prog2_ell2_design} 
  with the $\ell_{2}$-norm uncertainty model. 
  In \subref{fig:prog2_ell1_design} and \subref{fig:prog2_ell2_design}, 
  no restriction on distribution type is assumed, 
  $\epsilon=0.005$, $\alpha=0.2$, and $\beta=0.01$.   }
  \label{fig:prog2_design}
\end{figure*}

Consider a plane truss depicted in \reffig{fig:gs29bar}, where $n=29$ 
and $d=20$. 
The elastic modulus of the members is $20\,\mathrm{GPa}$. 
Vertical external forces of $100\,\mathrm{kN}$ are applied at two nodes 
as shown in \reffig{fig:gs29bar}. 
The upper bound for the compliance is $\bar{\pi}=1000\,\mathrm{J}$. 
The lower bounds for the member cross-sectional areas are 
$\bar{x}_{j} = 200\,\mathrm{mm^{2}}$ $(j=1,\dots,n)$. 

As for the uncertainty model, we consider both the model with the 
$\ell_{2}$-norm, putting $A=B=I$ with $m=k=n$. 
The best estimates of $\bi{\mu}$ and $\varSigma$ are 
\begin{align*}
  \tilde{\bi{\mu}}=\bi{0} , 
  \quad
  \tilde{\varSigma} = 0.05 I + 0.02 (\bi{1} \bi{1}^{\top}) , 
\end{align*}
where $\bi{1} \in \Re^{n}$ is an all-ones column vector. 
The magnitude of uncertainty is $\alpha=0.2$ and $\beta=0.01$. 
The specified upper bound for the failure probability is $\epsilon=0.01$. 

The optimization results obtained by the proposed method are listed in 
\reftab{tab:solution.ex.II}. 
\reffig{fig:prog2_ell2_loop_reliability} and 
\reffig{fig:prog2_ell2_loop_uncertainty} show the variations of the 
optimal value with respect to the failure probability and the magnitude 
of uncertainty, respectively. 

As done in section~\ref{sec:general.non-Gaussian}, we next require that 
the reliability constraint should be satisfied for all the probability 
distributions satisfying $\bi{\mu} \in U_{\bi{\mu}}$ and 
$\varSigma \in U_{\varSigma}$, i.e., for any probability distribution 
belonging to $\PC$ in \eqref{eq:set.of.all.distributions}. 
For the $\ell_{2}$-norm uncertainty, 
\reffig{fig:prog2ell2_all_dist_reliability} and 
\reffig{fig:prog2ell2_all_dist_uncertainty} report the variations of the 
optimal value with respect to the failure probability and the magnitude 
of uncertainty, respectively. 
\reffig{fig:prog2_design} collects the optimal solutions of the 
optimization problem without uncertainty, as well as the 
distributionally-robust RBDO problems with the two uncertainty models. 
Here, the width of each member in the figures are proportional to its 
cross-sectional area.

\section{Conclusions}
\label{sec:conclusion}

This paper has dealt with reliability-based design optimization (RBDO) 
of structures, in which knowledge of the input distribution that the 
design variables follow is imprecise. 
Specifically, we only know that the expected value vector and the 
variance-covariance matrix of the input distribution belong to a 
specified convex set, and do not know their true values. 
Then we attempt to optimize a structure, under the constraint that, 
even for the worst-case input distribution, the failure probability of 
the structure is no greater than the specified value. 
This constraint, called the distributionally-robust reliability 
constraint, is equivalent to infinitely many reliability constraints 
corresponding to all possible realizations of the input distribution. 
Provided that change of a constraint function value is well 
approximated as a linear function of uncertain perturbations of the 
design variables, 
this paper has presented a tractable reformulation of the 
distributionally-robust reliability constraint. 

This paper has established the concept of distributionally-robust RBDO, 
and developed fundamental results. 
Much remains to be studied. 
For instance, in this paper we have considered uncertainty only in the design 
variables. 
Other sources of uncertainty in structural optimization can be explored. 
Also, as discussed in section~\ref{sec:general.multiple.constraints}, 
multiple performance requirement in the form of 
\eqref{eq:multiple.constraint.2} remains to be studied. 
Extension to topology optimization is of great interest. 
Moreover, this paper relies on the assumption that quantity of interest 
is approximated, with sufficient accuracy, as a linear function of 
uncertainty perturbations of the design variables. 
Extension to nonlinear cases can be attempted. 
Finally, development of a more efficient algorithm for solving the 
optimization problem presented in this paper can be studied.

\paragraph{Acknowledgments}

This work is supported by 
Research Grant from the Maeda Engineering Foundation 
and 
JSPS KAKENHI (17K06633, 21K04351).

\end{document}